\documentclass[a4paper,12pt,reqno]{amsart}
\usepackage[utf8]{inputenc}
\usepackage[T1]{fontenc}
\usepackage{lmodern}
\usepackage[english]{babel}
\usepackage{microtype}
\usepackage[normalem]{ulem}
\usepackage{comment}
 
\usepackage{amsmath,amssymb,amsfonts,amsthm,esint}
\usepackage{mathtools,accents}
\usepackage{mathrsfs}
\usepackage{aliascnt}
\usepackage{braket}
\usepackage{bm}

\usepackage[a4paper,margin=3cm]{geometry}
\usepackage[citecolor=blue,colorlinks]{hyperref}

\usepackage{enumerate}
\usepackage{xcolor}
\usepackage{graphicx}
\usepackage{mathbbol}

\usepackage{hyperref}
\usepackage{graphicx, wrapfig}

\usepackage{txfonts}
\usepackage{ulem, cancel}
\usepackage{color}

\usepackage{enumitem}
\usepackage{xcolor}

\counterwithin{figure}{section}
\theoremstyle{plain} 

\newtheorem{thm}{Theorem}[section]
\newcommand{\bt}{\begin{thm}}
\newcommand{\et}{\end{thm}}

\newtheorem{cor}[thm]{Corollary}

\newcommand{\bc}{\begin{cor}}
\newcommand{\ec}{\end{cor}}

\newtheorem{lem}[thm]{Lemma}   

\newcommand{\bl}{\begin{lem}}
\newcommand{\el}{\end{lem}}

\newtheorem{prop}[thm]{Proposition}
\newcommand{\bp}{\begin{prop}}
\newcommand{\ep}{\end{prop}}

\newtheorem{defn}[thm]{Definition}

\newcommand{\ben}{\begin{itemize}}
\newcommand{\een}{\end{itemize}}

\newcommand{\bd}{\begin{defn}}        
\newcommand{\ed}{\end{defn}}

\newtheorem{rmrk}[thm]{Remark}   

\newcommand{\br}{\begin{rmrk}}
\newcommand{\er}{\end{rmrk}}

\newtheorem{example}[thm]{Example}

\newcommand{\tHto}{\stackrel { \tau-\textrm{H}}{\longrightarrow} }

\newcommand{\tGHto}{\stackrel { \tau-\textrm{GH}}{\longrightarrow} }

\newcommand{\be}{\begin{equation}}

\newcommand{\ee}{\end{equation}}

\newcommand{\R}{\mathbb{R}}

\newcommand{\dist}{\operatorname{dist}}
\newcommand{\diam}{\operatorname{diam}}

\newcommand{\disjointunion}{\sqcup}

\DeclareMathOperator{\interior}{int}

\def\iff{\Longleftrightarrow}

\title[Convergence of Timed-Metric Spaces and Causality]{Convergence of Timed-Metric Spaces and Causality}

\author[Che]{Mauricio Che} 
\author[Perales]{Raquel Perales}

\thanks{R. Perales was funded by 
the Austrian Science Fund
(FWF) [Grant DOI: 10.55776/EFP6]. M.~Che was funded by the Austrian Science Fund (FWF) [Grant DOI: 10.55776/STA32]. 
}

\begin{document}

\begin{abstract}  
We introduce the notion of timed Gromov–Hausdorff distance for timed-metric spaces. We prove that this distance is bilipschitz equivalent to the intrinsic timed-Hausdorff distance of Sakovich–Sormani, and therefore induces the same notion of convergence. We establish a compactness theorem for the timed Gromov–Hausdorff distance, obtained as a straightforward consequence of Gromov’s classical compactness theorem.

We then investigate the causal structure of timed-metric spaces and the stability of causality under intrinsic timed-Hausdorff convergence. We further analyze causally-null timed-metric spaces and develop several of their basic properties. As a curiosity, we include in an appendix Gromov’s original proof of his compactness theorem, as presented in his paper on groups of polynomial growth.
\end{abstract}

\maketitle
{
\hypersetup{linkcolor=blue}
\setcounter{tocdepth}{1}
\tableofcontents
}

\section{Introduction}\label{sec:intro}

We recall that a space-time consists of a time-oriented Lorentzian manifold, and that the development of notions of low-regularity space-times, as well as of weak convergence in this setting, has been an active area of research in recent years; see, for example, \cite{Kunzinger-Saemann,Minguzzi-Suhr-24,Cavalletti-Mondino,Cavalletti-Manini-Mondino-Optimal}. In particular, Sakovich--Sormani introduced the notion of a timed-metric space \cite{SakSor-Notions} and defined several distances between such spaces, including the intrinsic timed-Hausdorff distance.

In this manuscript, we work within the framework of Sakovich--Sormani, and we now recall some results in this setting. Given a space-time, Sakovich--Sormani \cite{SakSor-Notions} used the cosmological time of Andersson--Galloway--Howard \cite{AGH} together with the null distance of Sormani-Vega \cite{SV-Null} to associate a timed-metric space to any smooth causally null-compactifiable space-time. They further proved that the intrinsic timed-Hausdorff distance is definite, in the sense that the distance between two causally null-compactifiable space-times vanishes if and only if there exists a bijection preserving both the distance and the time. In the smooth setting, this is equivalent to the existence of a Lorentzian isometry between the original manifolds, by results of Sakovich--Sormani \cite{SakSor-Null}, Burtscher--Garcia Heveling \cite{Burtscher-Garcia-Heveling-Global}, and Galloway \cite{Galloway-causal}. Moreover, Che--Perales--Sormani \cite{Che-Perales-Sormani-2025} established a compactness theorem for the intrinsic timed-Hausdorff distance using addresses, which we will recall later; Perales showed in \cite{perales-intrinsic-timed-hausdorff} that, as conjectured by Sakovich--Sormani, intrinsic timed-Hausdorff convergence implies convergence with respect to some of the other notions introduced by Sakovich--Sormani; and in the upcoming work \cite{prados-zoghlami}, conditions on space-times implying intrinsic timed-Hausdorff convergence will be addressed.  We now proceed to state our main results.

\begin{defn}\label{defn:timed-metric-space}
A timed-metric space, 
$(X,d,\tau)$, is a metric space,
$(X,d)$,
endowed with a $1$-Lipschitz function,  
$
\tau:X\to [0,\infty),
$
which we call time function. 
\end{defn}

A useful example of a timed-metric space is the max-product space, implicitly defined in \cite[Propositions~4.24 and 4.25]{SakSor-Notions}.

\begin{defn}\label{def:product space}
Given a metric space $(Z,d_Z)$ and $\tau_{max}>0$, we define 
the max-product space of the compact interval, $[0,\tau_{max}]$, with
$(Z,d_Z)$
as follows:
\be \label{eq:max-prod0}
\left([0,\tau_{max}]\times Z, \,d_{prod}, \,\tau_{prod}\right),
\ee
where
\be \label{eq:max-prod1}
\tau_{prod}(t,z)=t 
\ee
and
\be
\label{eq:max-prod-2}
 d_{prod}\Big((t_1,z_1),(t_2,z_2)\Big)=\max\Big\{|t_1-t_2|,\, d_Z(z_1,z_2)\Big\}.
\ee
\end{defn}

We now use the max-product space construction to define an analogue of the Gromov--Hausdorff distance for timed-metric spaces.

\begin{defn}\label{defn:t-H-new}
The {\bf timed-Gromov--Hausdorff distance} between two 
compact timed-metric spaces is
\begin{equation*}
d_{\tau-GH}\bigg((X_1,d_1,\tau_1),(X_2,d_2,\tau_2)\bigg)=
\inf d_H^{[0,\tau_{max}]\times Z}\bigg(\varphi_1(X_1),\varphi_2(X_2)\bigg)
\end{equation*}
where the infimum is over all compact metric spaces, $Z$, and over all time and distance preserving maps,
\[
\varphi_j: (X_j,d_j,\tau_j)\to
([0,\tau_{max}]\times Z, d_{prod}, \tau_{prod}),
\]
where $
\tau_{max}=\max\{\tau_{1,\max},\tau_{2,\max}\}$ and $\tau_{j,\max}=\max\{\tau_j(x):\, x\in X_j\}$.
\end{defn}

The
timed-Gromov--Hausdorff distance between two 
compact timed-metric spaces is symmetric and
takes finite values.  In fact,
\[
d_{\tau-GH}\bigg((X_1,d_1,\tau_1),(X_2,d_2,\tau_2)\bigg)\le
d_{GH}\bigg((X_1,d_1),(X_2,d_2)\bigg)+\tau_{max}.
\]
In Theorem~\ref{thm:timed-H-equiv}, we prove that this notion is bi-Lipschitz equivalent to 
the intrinsic timed-Hausdorff distance defined by
Sakovich--Sormani in \cite[Definition~4.22]{SakSor-Notions}. This equivalence was first conjectured in \cite[Remark 4.26]{SakSor-Notions}.
Since Sakovich--Sormani's distance is definite,  our timed-Gromov--Hausdorff distance is definite as well, this means that
\begin{equation*}
d_{\tau-GH}\bigg((X_1,d_1,\tau_1),(X_2,d_2,\tau_2)\bigg)=0
\end{equation*}
if and only if there is a distance and time preserving bijection
between the timed-metric spaces.  We prove in Theorem~\ref{thm:triangle} that
this notion also satisfies the triangle inequality. In the Appendix \ref{sec-appendix}, we reprove Gromov's Compactness Criterion from \cite{Gromov-poly}, and as a straightforward application, we prove a compactness theorem for timed Gromov--Hausdorff distance. However, Che--Perales--Sormani's compactness result for intrinsic timed-Hausdorff distance \cite{Che-Perales-Sormani-2025} can also be seen as a compactness theorem for timed Gromov--Hausdorff distance.

\begin{thm}[Compactness Theorem]\label{thm:timed-GH-compactness}
If $(X_j,d_j,\tau_j)$ is a sequence of compact timed-metric spaces that are uniformly equibounded
and equicompact, and there exists $\tau_{\max} \geq 0$ such that 
\be
\bigcup_{j \in \mathbb N} \tau_j(X_j)\subset [0,\tau_{max}].
\ee
Then a subsequence converges in the timed-Gromov--Hausdorff sense to a compact timed-metric space $(X_\infty, d_\infty, \tau_\infty)$. In fact, there exist
compact $Z$ and distance and time preserving maps,
\be\label{eq-TDmaps}
\varphi_j= (\tau_j, \xi_j): X_j \to [0,\tau_{max}]\times Z 
\ee
where $\xi_j:X_j \to Z$ is {\bf distance preserving}, such that the Hausdorff distance between the images converges to zero:
\be\label{eq-HausReal}
d_H^{[0,\tau_{max}]\times Z}\bigg(\varphi_j(X_j),\varphi_\infty(X_\infty)\bigg)\to 0.
\ee
\end{thm}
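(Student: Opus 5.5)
The plan is to reduce the statement to Gromov's classical compactness theorem, in the form proved in Appendix~\ref{sec-appendix}. Since the $(X_j,d_j)$ are uniformly equibounded and equicompact, Gromov's theorem, in its embedding form, gives a subsequence, a compact metric space $Z$, isometric embeddings $\xi_j:X_j\to Z$, and a compact set $Y\subset Z$ with $d_H^Z(\xi_j(X_j),Y)\to 0$. I would phrase the appendix result so that it directly produces a common compact $Z$; for example, one realises all the $X_j$ inside a fixed compact space built from a countable $\epsilon$-net structure. The time functions are then handled separately by an Arzelà--Ascoli type argument.

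Next, define $\varphi_j=(\tau_j,\xi_j):X_j\to[0,\tau_{max}]\times Z$. This map is time preserving by construction. It is also distance preserving, because
\[
d_{prod}(\varphi_j(x),\varphi_j(y))=\max\{|\tau_j(x)-\tau_j(y)|,\,d_j(x,y)\}=d_j(x,y),
\]
where the last equality uses that $\tau_j$ is $1$-Lipschitz. The images $\varphi_j(X_j)$ are compact subsets of the compact space $W=[0,\tau_{max}]\times Z$. By Blaschke's theorem, the compact subsets of $W$ form a compact space under $d_H^W$. Passing to a further subsequence, we may therefore assume $\varphi_j(X_j)\to K$ in $d_H^W$ for some compact $K\subset W$.

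The main step is to show that $K$ is the image of a timed-metric space under a distance and time preserving map of the required form. Let $\pi:W\to Z$ be the projection. It is $1$-Lipschitz, so $\pi(K)$ is the Hausdorff limit of $\xi_j(X_j)$, which is $Y$. I claim that $\pi|_K$ is injective and that $d_{prod}$ restricted to $K$ equals $d_Z\circ\pi$.

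To see this, take $p=(t,z)$ and $q=(s,w)$ in $K$. Approximate them by $\varphi_j(x_j)\to p$ and $\varphi_j(y_j)\to q$. Then $|\tau_j(x_j)-\tau_j(y_j)|\le d_j(x_j,y_j)=d_Z(\xi_j(x_j),\xi_j(y_j))$. Letting $j\to\infty$ gives $|t-s|\le d_Z(z,w)$. Hence $d_{prod}(p,q)=d_Z(z,w)$, and in particular $z=w$ forces $t=s$.

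Now set $X_\infty=Y$, $d_\infty=d_Z|_Y$, $\xi_\infty=\mathrm{id}_Y$, and $\tau_\infty=\tau_{prod}\circ(\pi|_K)^{-1}$. The inequality above shows that $\tau_\infty$ is $1$-Lipschitz, with values in $[0,\tau_{max}]$. So $(X_\infty,d_\infty,\tau_\infty)$ is a compact timed-metric space. The map $\varphi_\infty=(\tau_\infty,\xi_\infty)$ is distance and time preserving, and its image is exactly $K$. This gives \eqref{eq-HausReal}.

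Finally, the definition of $d_{\tau-GH}$ uses $\max\{\tau_{j,\max},\tau_{\infty,\max}\}$ rather than the uniform bound $\tau_{max}$. To handle this, I would note that projecting $[0,\tau_{max}]$ onto $[0,\tau']$ by $t\mapsto\min\{t,\tau'\}$ is $1$-Lipschitz and fixes both images. Alternatively, one can check that the Hausdorff distance computed in the smaller product space is no larger. Either way, $d_{\tau-GH}(X_j,X_\infty)\to 0$.

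I expect the main obstacle to be the first step, namely obtaining a single compact $Z$ containing isometric copies of all the $X_j$ together with their limit. If the appendix only yields convergence of $d_{GH}$, I would first build $Z$ as a disjoint union $\bigsqcup_j X_j\sqcup X_\infty$. Its metric would be glued inductively from near-optimal Gromov--Hausdorff couplings between consecutive terms, chosen with summable errors so that the union is totally bounded. I would then take the completion of this union.
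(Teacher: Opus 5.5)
Your proposal is correct and follows essentially the same route as the paper. Both arguments embed the $X_j$ into a common compact space via the appendix's Gromov theorem, set $\varphi_j=(\tau_j,h_j)$, take a Blaschke limit $K$ in $[0,\tau_{max}]\times F$, and pass $|\tau_j(x_j)-\tau_j(y_j)|\le d_j(x_j,y_j)$ to the limit to see that $K$ is the graph of a $1$-Lipschitz time function over the Hausdorff limit of the $h_j(X_j)$. Your formulation through injectivity of $\pi|_K$ makes explicit the uniqueness of the time coordinate, which the paper only asserts. Your remark on replacing $\tau_{max}$ by $\max\{\tau_{j,\max},\tau_{\infty,\max}\}$ settles a point the paper leaves implicit; it is harmless because the Hausdorff distance depends only on the metric restricted to the union of the two images.
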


Similarly to the convergence of points under the Gromov–Hausdorff distance, we obtain convergence of both points and ``times'' under the timed-Gromov--Hausdorff and 
intrinsic timed-Hausdorff distances, c.f. Remark \ref{rmrk-pointandtimeC}.

\begin{cor}\label{cor:pointtimeConvergence}
Suppose a sequence of compact 
timed-metric spaces converges 
in the intrinsic timed-Hausdorff sense to a compact timed-metric space,
\begin{equation*}
(X_j,d_j,\tau_j)\tGHto
(X_\infty,d_\infty,\tau_\infty).
\end{equation*}
Then for a subsequence $j_k$ the conclusions of Theorem~\ref{thm:timed-GH-compactness}
hold and the following is satisfied. 
Given any sequence of points $p_j \in X_j$, there exists $p_\infty \in X_\infty$ and a subsequence, that we do not relabel, such that 
\be\label{eq-pointConv}
\varphi_j(p_j)=(\tau_j(p_j), \xi_j(p_j))\to \varphi_\infty(p_\infty)= (\tau_\infty(p_\infty), \xi_\infty(p_\infty)).
\ee
Conversely, given $p_\infty \in X_\infty$ there is a sequence $p_j\in X_j$ such that 
\be\label{eq-pointConv2}
\varphi_j(p_j) \to \varphi_\infty(p_\infty).
\ee
\end{cor}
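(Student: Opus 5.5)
The plan is to reduce to the compactness theorem and then argue in a fixed compact ambient space, exactly as for points under Gromov--Hausdorff convergence.

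\textbf{Step 1: uniform bounds.} Intrinsic timed-Hausdorff convergence to $(X_\infty,d_\infty,\tau_\infty)$ is, by Theorem~\ref{thm:timed-H-equiv}, equivalent to timed-Gromov--Hausdorff convergence. Timed-Gromov--Hausdorff convergence in particular gives Gromov--Hausdorff convergence of $(X_j,d_j)$ to $(X_\infty,d_\infty)$: restrict to the $Z$-coordinate of the max-product and use $d_Z\le d_{prod}$. A Gromov--Hausdorff convergent sequence of compact metric spaces together with its limit is uniformly bounded and uniformly totally bounded. The times are also uniformly bounded, since $|\tau_j(x)-\tau_\infty(y)|\le d_{prod}(\varphi_j(x),\varphi_\infty(y))$ inside a common max-product. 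Hence $\bigcup_j\tau_j(X_j)\subset[0,\tau_{max}]$ for some $\tau_{max}$.

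\textbf{Step 2: a common embedding.} Add $X_\infty$ to the sequence and apply Theorem~\ref{thm:timed-GH-compactness}. This gives a subsequence $j_k$, a compact $Z$, and distance and time preserving maps $\varphi_j=(\tau_j,\xi_j)$ into $[0,\tau_{max}]\times Z$ with Hausdorff convergence of the images to the image of some limit space.

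The main obstacle is ensuring that this limit is the given $(X_\infty,d_\infty,\tau_\infty)$ and not merely some subsequential limit. To handle it:
\begin{itemize}
\item The subsequential limit is at timed-Gromov--Hausdorff distance zero from $X_\infty$, by uniqueness of limits, which follows from the triangle inequality (Theorem~\ref{thm:triangle}).
\item By definiteness there is a time and distance preserving bijection between the two limits.
\item Composing with this bijection, we may take $\varphi_\infty$ to be defined on $X_\infty$ itself.
\end{itemize}
Alternatively, one can rerun the proof of Theorem~\ref{thm:timed-GH-compactness} (via Gromov's construction of $Z$ in Appendix~\ref{sec-appendix}) with $X_\infty$ included from the start.

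\textbf{Step 3: convergence of points.} Given $p_j\in X_j$, the points $\varphi_j(p_j)$ lie in the compact space $[0,\tau_{max}]\times Z$, so a further subsequence converges to some $q$. Since
\[
d_H\bigl(\varphi_j(X_j),\varphi_\infty(X_\infty)\bigr)\to0,
\]
there are $y_j\in X_\infty$ with $d_{prod}(\varphi_j(p_j),\varphi_\infty(y_j))\to0$. Therefore $\varphi_\infty(y_j)\to q$. As $\varphi_\infty(X_\infty)$ is compact and hence closed, $q=\varphi_\infty(p_\infty)$ for some $p_\infty\in X_\infty$. This gives \eqref{eq-pointConv}, and the coordinatewise statement follows because $d_{prod}$ is a max of the coordinate distances.

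\textbf{Step 4: the converse.} Given $p_\infty$, choose $p_j\in X_j$ minimizing $d_{prod}(\varphi_j(p_j),\varphi_\infty(p_\infty))$; a minimizer exists because $X_j$ is compact. This distance is at most $d_H(\varphi_j(X_j),\varphi_\infty(X_\infty))\to0$, which gives \eqref{eq-pointConv2}.
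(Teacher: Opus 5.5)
Your proof is correct and follows the same route the paper uses implicitly. The paper gets the uniform bounds from Remark~\ref{rmrk-converseEquibddEquicpct}, takes a common compact max-product from Theorem~\ref{thm:timed-GH-compactness}, and then uses compactness plus Hausdorff convergence exactly as in the proof of Corollary~\ref{cor:CausalSeq}. Your triangle-inequality and definiteness step makes explicit the identification of the subsequential limit with the given $X_\infty$, which the paper takes for granted. Your ``rerun with $X_\infty$ included'' alternative would still need that step, because the Blaschke limit need not be the image of $X_\infty$.

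One small fix in Step~1: the $Z$-component of a general $\tau$-GH embedding is only $1$-Lipschitz (Lemma~\ref{lem:zeta-dist-pres}), so projecting to $Z$ does not bound the Gromov--Hausdorff distance. Instead note $d_{GH}\le d_{\tau\text{-}GH}$ directly, since the $\varphi_j$ are isometric embeddings into the max-product, which is itself a metric space.
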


\bigskip

Since causality plays a crucial role in the Lorentzian setting, we define a causal structure in our setting.

\begin{defn}\label{defn:causal-structure}
For a timed-metric space $(X,d,\tau)$ we define a \textbf{causal structure} as follows: For $p,q \in X$ we say that $p$ is in the \textbf{causal future} of $q$, written  $p\in J^+(q)$, iff
\begin{equation}\label{def-causal-struc}
\tau(p)-\tau(q)=d(p,q).
\end{equation}
If this holds, we also say that $q$ is in the \textbf{causal past} of $p$,
written $q\in J^{-}(p)$. When necessary, we write $J^{+}_X$ and $J^{-}_X$ in each case. We say that $p$ and $q$ are \textbf{causally related} if 
$p\in J^+(q)$ or $q\in J^+(p)$.
\end{defn}

As a consequence of Theorem \ref{thm:timed-GH-compactness} we obtain the following corollary describing how causality behaves under timed-Gromov--Hausdorff convergence. The first part shows stability of causality.

\begin{cor}\label{cor:CausalSeq}
Suppose a sequence of compact 
timed-metric spaces converges 
in the timed-Gromov--Hausdorff sense to a compact timed-metric space,
\be
(X_j,d_j,\tau_j)\tGHto
(X_\infty,d_\infty,\tau_\infty).
\ee
Then for a subsequence $j_k$
the conclusions of Theorem~\ref{thm:timed-GH-compactness}
hold and the following is satisfied:

\begin{enumerate}
\item\label{item:CausalSeq1} Given two sequences $p_j,q_j\in X_j$ such that 
$p_j\in J^+_{X_j}(q_j)$, there exists 
$p_\infty,q_\infty \in X_\infty$ such that
\be\label{eq-CausConv}
p_\infty\in J^+_{X_\infty}(q_\infty)
\ee
and $\varphi_j(p_j) \to \varphi_\infty(p_\infty)$, $\varphi_j(q_j) \to \varphi_\infty(q_\infty)$.
\item\label{item:CausalSeq2} If $p_\infty\notin J^+_{X_\infty}(p'_\infty)$
then there exist two sequences of points 
$p_j, p'_j \in X_j$ such that
$\varphi_j(p_j) \to \varphi_\infty(p_\infty)$ and 
$\varphi_j(p'_j) \to \varphi_\infty(p'_\infty)$, and for 
 $j$ sufficiently large it holds, 
\be
p_j\notin J^+_{X_j}(p'_j).
\ee
\end{enumerate}
\end{cor}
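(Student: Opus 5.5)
We first obtain the common embedding and then argue pointwise in $[0,\tau_{max}]\times Z$. Since the sequence converges in the timed-Gromov--Hausdorff sense to a compact limit, it is a convergent, hence Cauchy, sequence. By the bi-Lipschitz equivalence of Theorem~\ref{thm:timed-H-equiv}, together with the standard fact that a GH-convergent sequence is uniformly equibounded and equicompact, the hypotheses of Theorem~\ref{thm:timed-GH-compactness} hold; the times are bounded because $\tau_j$ converge up to small error. We then pass to a subsequence $j_k$ with a compact $Z$ and maps $\varphi_j=(\tau_j,\xi_j)$, $\varphi_\infty=(\tau_\infty,\xi_\infty)$ that preserve both distance and time, and satisfy \eqref{eq-HausReal}. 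The limit produced by the theorem agrees with the given $X_\infty$ up to a time- and distance-preserving bijection, by definiteness, so we may identify the two. Corollary~\ref{cor:pointtimeConvergence} then supplies point convergence in both directions.

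For part \eqref{item:CausalSeq1}, take $p_j\in J^+_{X_j}(q_j)$, so that $\tau_j(p_j)-\tau_j(q_j)=d_j(p_j,q_j)$. By the first half of Corollary~\ref{cor:pointtimeConvergence}, applied twice along nested subsequences, there are $p_\infty,q_\infty$ with $\varphi_j(p_j)\to\varphi_\infty(p_\infty)$ and $\varphi_j(q_j)\to\varphi_\infty(q_\infty)$. The first coordinate gives $\tau_j(p_j)\to\tau_\infty(p_\infty)$, and likewise for $q$. Because each $\xi_j$ is distance preserving, $d_j(p_j,q_j)=d_Z(\xi_j(p_j),\xi_j(q_j))\to d_Z(\xi_\infty(p_\infty),\xi_\infty(q_\infty))=d_\infty(p_\infty,q_\infty)$. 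Passing to the limit in the causal identity yields $\tau_\infty(p_\infty)-\tau_\infty(q_\infty)=d_\infty(p_\infty,q_\infty)$, which is \eqref{eq-CausConv}.

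For part \eqref{item:CausalSeq2}, the second half of Corollary~\ref{cor:pointtimeConvergence} gives $p_j,p'_j$ with $\varphi_j(p_j)\to\varphi_\infty(p_\infty)$ and $\varphi_j(p'_j)\to\varphi_\infty(p'_\infty)$. The quantity $\tau(p)-\tau(p')-d(p,p')$ is a continuous function of the pair of images in $([0,\tau_{max}]\times Z)^2$. By the hypothesis $p_\infty\notin J^+_{X_\infty}(p'_\infty)$, its value at the limit pair is some $\epsilon\neq0$. Hence for $j$ large the value $\tau_j(p_j)-\tau_j(p'_j)-d_j(p_j,p'_j)$ is within $|\epsilon|/2$ of $\epsilon$, in particular nonzero, so $p_j\notin J^+_{X_j}(p'_j)$.

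The main obstacle is the first step: justifying that a timed-GH convergent sequence satisfies the compactness hypotheses, and that the subsequential limit in the single common $Z$ can be identified with the given $X_\infty$ via definiteness and the triangle inequality (Theorem~\ref{thm:triangle}). Once that is in place, the causal statements follow from continuity of $\tau$ and $d$ under convergence of the images.
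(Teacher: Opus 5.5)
Your proposal is correct and follows essentially the paper's argument. You obtain the common embedding from Remark~\ref{rmrk-converseEquibddEquicpct} and Theorem~\ref{thm:timed-GH-compactness}, pass the causal identity to the limit for (1), and use the converse point convergence of Corollary~\ref{cor:pointtimeConvergence} for (2). The differences are minor. You make explicit the identification of the subsequential limit with the given $X_\infty$ via Theorem~\ref{thm:triangle} and definiteness, which the paper leaves implicit. You also prove (2) directly from continuity of $(w,w')\mapsto\tau_{prod}(w)-\tau_{prod}(w')-d_{prod}(w,w')$, rather than by contradiction through (1).
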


\begin{rmrk}\label{rmrk-pointandtimeC}
In Corollary \ref{cor:pointtimeConvergence}
and Corollary \ref{cor:CausalSeq} we do have convergence of points in the following sense. Since the maps $\xi_j:X_j \to Z$ are distance preserving, it is clear that 
for any sequence $x_k \in X_j$, $k \in \mathbb N$, and $x \in X_j$,
\[
x_k \to x \iff \xi_j(x_k) \to \xi_j(x).
\]
Note that by  
Lemma \ref{lem:zeta-dist-pres}, the class of functions considered in the definition of $d_{\tau\text{-}GH}$ is larger than the class consisting of all functions $(\tau,\zeta): X \to [0,\tau_{\max}] \times Z$ for which $\zeta$ is distance-preserving. The proofs of both corollaries also hold under 
intrinsic timed-Hausdorff distance, since in this case the class of functions 
used to calculate $d_{\tau\text{-}H}$ consists precisely of all maps $(\tau,\kappa): X \to [0,\tau_{\max}] \times Z$ with $\kappa$ a Fr\'echet map, which is distance-preserving.  Moreover, when using addresses in the sense of Definition~\ref{defn:addresses}, the sequences appearing in the final parts of these corollaries
are automatically given by the properties of the addresses,  c.f. Corollary \ref{cor:CausalSeqWithAddressesItem2}.

\end{rmrk}

Using the causal structure defined above, we can also define an associated \textbf{null distance}, in analogy to 
\cite{SV-Null}. We prove that this null distance is idempotent, whenever it defines a metric. 

\begin{prop}\label{prop-null-distance is greater}
Let $(X,d,\tau)$ be a timed-metric space. 
Then the null distance $\hat{d}_{d,\tau}$, given in Definition~\ref{defn:null-distance}, is an extended distance such that
\begin{equation}\label{eq:null-distance is greater}
\hat{d}_{d,\tau}\geq d,
\end{equation}
and $\tau$ is $1$-Lipschitz with respect to $\hat{d}_{d,\tau}$.
Moreover, if $\hat{d}_{d,\tau}$ only takes finite values then $(X,\hat{d}_{d,\tau},\tau)$ is a timed-metric space and $\hat{d}_{\hat{d}_{d,\tau},\tau} = \hat{d}_{d,\tau}$.
\end{prop}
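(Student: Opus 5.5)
The definition of the null distance is not shown above, so I assume the standard Sormani--Vega form. A \emph{piecewise causal path} from $x$ to $y$ is a finite chain $x=x_0,x_1,\dots,x_k=y$ in which consecutive points are causally related in the sense of Definition~\ref{defn:causal-structure}. For $p,q\in X$ set
\[
\hat{d}_{d,\tau}(x,y)=\inf\sum_{i=1}^k |\tau(x_i)-\tau(x_{i-1})|,
\]
with the infimum over all such chains, and $\hat{d}_{d,\tau}(x,y)=\infty$ if no chain exists. The plan is to verify the extended metric axioms directly from this formula and then compare chains for the idempotence claim.

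First I would check the extended distance axioms. Symmetry holds because causal relatedness is symmetric and reversing a chain preserves the sum. The triangle inequality follows by concatenating chains. We have $\hat{d}_{d,\tau}(x,x)=0$ via the trivial chain. Definiteness will follow from \eqref{eq:null-distance is greater}, which I prove next.

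The key observation is that for consecutive causally related points, $|\tau(x_i)-\tau(x_{i-1})|=d(x_i,x_{i-1})$ by \eqref{def-causal-struc}. Hence every chain sum equals $\sum_i d(x_{i-1},x_i)\ge d(x,y)$ by the triangle inequality for $d$. Taking the infimum gives $\hat{d}_{d,\tau}\ge d$, so in particular $\hat{d}_{d,\tau}(x,y)=0$ forces $x=y$. The Lipschitz property follows from $|\tau(x)-\tau(y)|\le d(x,y)\le\hat{d}_{d,\tau}(x,y)$; alternatively, $|\tau(y)-\tau(x)|\le\sum_i|\tau(x_i)-\tau(x_{i-1})|$ for any chain.

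If $\hat{d}_{d,\tau}$ is finite everywhere, it is a genuine metric and $\tau$ is $1$-Lipschitz for it, so $(X,\hat{d}_{d,\tau},\tau)$ is a timed-metric space by Definition~\ref{defn:timed-metric-space}. The main step is idempotence, and I would handle it by showing that the causal structures of $(X,d,\tau)$ and $(X,\hat d,\tau)$ coincide, writing $\hat d=\hat{d}_{d,\tau}$.

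\begin{itemize}
\item If $\tau(p)-\tau(q)=d(p,q)$, the one-step chain gives $\hat d(p,q)\le \tau(p)-\tau(q)$. The $1$-Lipschitz property gives the reverse inequality, so $\tau(p)-\tau(q)=\hat d(p,q)$.
\item Conversely, if $\tau(p)-\tau(q)=\hat d(p,q)$, then $d(p,q)\le \hat d(p,q)=\tau(p)-\tau(q)\le d(p,q)$, so equality holds.
\end{itemize}

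The two structures therefore have the same causal pairs. They then have the same piecewise causal chains, and the chain sums depend only on $\tau$. The two infima are taken over identical sets of identical quantities, which yields $\hat{d}_{\hat d,\tau}=\hat d$. The only subtlety is that the causal relation for $\hat d$ requires $\hat d$ to be finite, which is exactly the hypothesis of the last statement.
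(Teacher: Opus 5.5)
Your proposal is correct and follows essentially the same route as the paper. You get the extended-metric axioms by reversing and concatenating chains, and $\hat{d}_{d,\tau}\ge d$ from the identity $|\tau(x_i)-\tau(x_{i-1})|=d(x_{i-1},x_i)$ along causal links. For idempotence you show that the causal structures of $d$ and $\hat{d}_{d,\tau}$ coincide, so the two infima range over the same chains with the same sums. The only cosmetic difference is in proving $J^-\subset J^-_{\hat d}$: you use the one-step chain together with the Lipschitz bound for $\tau$ with respect to $\hat{d}_{d,\tau}$, whereas the paper uses the chain $x,x,y$ together with $d\le \hat{d}_{d,\tau}$, which amounts to the same thing.
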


When a timed-metric space $(X,d,\tau)$ is such that $\hat{d}_{d,\tau}=d$, we say that $(X,d,\tau)$ is \textbf{causally null}. Thus, Proposition~\ref{prop-null-distance is greater}
says that, for any timed-metric space $(X, d, \tau)$ such that $\hat{d}_{d,\tau}$ is a distance, the timed-metric space $(X, \hat{d}_{d,\tau},\tau)$ is causally
null.

\bigskip 
Our paper is organized as follows.
In Section~\ref{sec:Background-on-metric}, we review Gromov--Hausdorff distance, Fr\'echet maps, timed-metric spaces, timed-Fr\'echet maps, intrinsic timed-Hausdorff distance and Gromov's precompactness for intrinsic timed-Hausdorff distance. In Section~\ref{sec-timedGH}, we prove Theorem~\ref{thm:timed-H-equiv}, which asserts that 
Definition~\ref{defn:t-H-new} is equivalent to the notion previously introduced by Sakovich--Sormani in \cite[Definition~4.22]{SakSor-Notions}. We also show that the triangle inequality holds for this new notion. Further, we prove Theorem \ref{thm:timed-GH-compactness}.

In Section~\ref{s:understanding causality} we show that the causal structure induced by a timed-metric space defines a partial order, 
give the proof of Corollary~\ref{cor:CausalSeq}, and reformulate  both second items of Corollary \ref{cor:pointtimeConvergence} and Corollary \ref{cor:CausalSeq}, for the intrinsic timed-Hausdorff distance using addresses in the sense of Definition~\ref{defn:addresses}. Moreover,
in Propositions~\ref{prop:causal curves are realizers} and \ref{prop:piecewise causal realizers},
we establish two results that are analogous to \cite[Corollary~3.19]{SV-Null} and to part of \cite[Lemma~3.20]{SV-Null}, concerning curves that are distance realizers in causally-null timed-metric spaces.

In Section \ref{s:causally-null tms} we study the null distance and causally-null timed-metric spaces. In particular, we give the proof of Proposition \ref{prop-null-distance is greater}. 
Then we see that
 \cite[Lemma~3.5]{Kunzinger-Steinbauer-22} 
can be adapted to provide a sufficient condition for obtaining a causally-null timed-metric space. 
We conclude the paper proving 
Gromov's Compactness and Embedding Theorem \cite{Gromov-poly}, which we use to prove Theorem \ref{thm:timed-GH-compactness}.

\bigskip

{\bf Acknowledgements:}  The authors would like to thank Christina Sormani for very fruitful discussions and encouraging them to explore this project. The second named author gratefully acknowledges support from the Simons Center for Geometry and Physics where she had the opportunity to discuss in person with Sormani as part of the Program ``Geometry and Convergence in Mathematical General Relativity'' in September 2025. 
The authors were funded by the Austrian Science Fund
(FWF) [Grant DOI's: 10.55776/EFP6, 10.55776/STA32]. For open access purposes, the authors have applied a CC BY public copyright license to any author-accepted manuscript version arising from this submission.

\section{Background}\label{sec:Background-on-metric}

We briefly review 
basic material about metric spaces, see \cite{BBI} for details. Moreover, we state 
results and definitions from Sakovich--Sormani in \cite{SakSor-Notions} and Che--Perales--Sormani in
\cite{Che-Perales-Sormani-2025} that we will use.

\subsection{Metric spaces and GH distance}

Given a metric space $(Z,d_Z)$, the Hausdorff distance between two subsets $U,W\subset Z$
is given by
\[
d_H^Z\bigg(U,W\bigg)=\inf
\left\{ r>0:  
\begin{array}{c} \forall u\in U, \exists w\in W 
\textrm{ s.t. }
d_Z(u,w)<r\\
\forall w\in W, \exists u\in U
\textrm{ s.t. }
d_Z(u,w)<r
\end{array}
\right\}.
\]

Given two compact metric spaces $(X_j,d_j)$, $j=1,2$, 
the Gromov--Hausdorff (GH) distance between them is
defined by
\[
d_{GH}\bigg((X_1,d_1),(X_2,d_2)\bigg)=\inf d_H^Z\bigg(\varphi_1(X_1),\varphi_2(X_2)\bigg)
\]
where the infimum is over all
 distance preserving maps,
$\varphi_j:X_j\to Z$, into all possible metric spaces, $Z$. 
Furthermore, a map between metric-spaces, 
$\varphi:(X,d_X)\to (Z,d_Z)$, is said to be
distance preserving if
\[
d_Z(\varphi(x),\varphi(x'))=d_X(x,x') \qquad \forall x,x'\in X.
\]
A particular class of distance preserving maps
are Fr\'echet maps, to define them recall that $(\ell^\infty, d_{\ell^\infty})$
is the Banach space defined as 
\be\label{eq:defn-ell-infty}
\ell^\infty=\{(s_1,s_2,...)\,: s_i \in {\mathbb{R}}, \, d_{\ell^\infty}((s_1,s_2,...).(0,0,...))<\infty\}
\ee
where   
\be\label{eq:d-ell-infty}
d_{\ell^\infty}((s_1,s_2,...),(r_1,r_2,...))
=\sup\{|s_i-r_i|\,:\, i\in {\mathbb N}\}.
\ee

\begin{defn}\label{defn:Frechet}
Given a separable metric-space, $(X,d)$, 
for any countable dense collection of points $\mathcal{N}=\{x_1,x_2,\ldots\}$ of $X$, 
the map
\[
\kappa_X=\kappa_{X,\mathcal{N}}: (X,d)\to (\ell^\infty, d_{\ell^\infty})
\]
given by
\be\label{eq:Frechet}
\kappa_X(x)=(d(x_1,x),d(x_2,x),...)\subset \ell^\infty.
\ee
is called a Fr\'echet map. 
\end{defn}

Fr\'echet maps are distance-preserving (see \cite{Frechet} and \cite{SakSor-Notions}). 
Adopting the notation of Sakovich--Sormani, given $(X,d_X)$ and $(Y,d_Y)$ compact metric spaces, let
\begin{equation*}
d_{\kappa\text{-}GH}\big((X,d_X),(Y,d_Y)\big)
:= \inf \, d_H^{\ell^\infty}\big(\kappa_X(X), \kappa_Y(Y)\big),
\end{equation*}
where the infimum is taken over all pairs of Fréchet maps $\kappa_X$ and $\kappa_Y$. 
By \cite[Proposition~2.7]{SakSor-Notions}, 
$d_{GH}$ is biLipschitz equivalent to $d_{\kappa  \text{-}GH}$.
In analogy with this, in Theorem \ref{thm:timed-H-equiv} we show bilipschitz 
equivalence between $d_{\tau-H}$ and $d_{\tau-GH}$.

\subsection{Timed-metric spaces and intrinsic timed-Hausdorff distance}

We now briefly review some results and definition from Sakovich--Sormani in
\cite{SakSor-Notions}:

\begin{defn}\label{defn:tau-K}
Given a compact timed-metric space, $(X,d,\tau)$
and a countably dense collection of points,
${\mathcal N}$,
we define the
timed-Fr\'echet map as 
\[
\kappa_{\tau,X}
=\kappa_{\tau,X,{\mathcal N}}: X \to [0,\tau_{max}]\times \ell^\infty\subset \ell^\infty,
\qquad 
\kappa_{\tau,X}=
(\tau,\kappa_{X,{\mathcal N}}).
\]
\end{defn}

\begin{prop}[\cite{SakSor-Notions}]
\label{prop:tau-K-dist-pres}
Any
timed-Fr\'echet map, 
$
\kappa_{\tau,X}: X \to [0,\tau_{max}]\times \ell^\infty\subset \ell^\infty,
$
is distance preserving.
\end{prop}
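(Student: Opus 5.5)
The map $\kappa_{\tau,X}=(\tau,\kappa_{X,\mathcal N})$ lands in $[0,\tau_{max}]\times\ell^\infty$. We read this product with the max-product metric $d_{prod}$ of Definition~\ref{def:product space}. Equivalently, it sits inside $\ell^\infty$ by prepending $\tau$ as an extra coordinate, $(t,s_1,s_2,\dots)$, and the $\ell^\infty$ sup-distance on such sequences is exactly $\max\{|t-t'|,\,d_{\ell^\infty}(s,s')\}$. The plan is therefore to compute
\[
d_{\ell^\infty}\big(\kappa_{\tau,X}(x),\kappa_{\tau,X}(y)\big)=\max\Big\{|\tau(x)-\tau(y)|,\; d_{\ell^\infty}\big(\kappa_X(x),\kappa_X(y)\big)\Big\}
\]
and show that it equals $d(x,y)$.

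First I use that the Fréchet map $\kappa_X$ is distance preserving, which the paper cites from \cite{Frechet,SakSor-Notions}. For completeness I would recall why. Each coordinate satisfies $|d(x_i,x)-d(x_i,y)|\le d(x,y)$ by the triangle inequality, so the sup is at most $d(x,y)$. For the reverse, choose a sequence $x_{i_k}\in\mathcal N$ converging to $x$ by density; then $|d(x_{i_k},x)-d(x_{i_k},y)|\to d(x,y)$, so the sup is at least $d(x,y)$. The coordinates are bounded because $X$ is compact, so $\kappa_X(x)\in\ell^\infty$.

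Second, since $\tau$ is $1$-Lipschitz (Definition~\ref{defn:timed-metric-space}), we have $|\tau(x)-\tau(y)|\le d(x,y)$. The maximum of this term and $d(x,y)$ is therefore $d(x,y)$, which gives distance preservation.

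The only mildly delicate point is the density argument in the lower bound for $\kappa_X$, together with being explicit about which metric the product carries. Neither is a real obstacle: the $1$-Lipschitz condition is exactly what makes adding the time coordinate harmless.
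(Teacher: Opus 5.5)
Your proof is correct and takes the same approach the paper relies on. The paper only cites \cite{SakSor-Notions} for this fact, but it uses exactly your reasoning in the proof of Theorem~\ref{thm:timed-GH-compactness} for the maps $(\tau_j,h_j)$: the spatial part is distance preserving and $|\tau(x)-\tau(y)|\le d(x,y)$, so the maximum in the sup/max-product metric equals $d(x,y)$.
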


\begin{defn}[{\cite[Definition~4.22]{SakSor-Notions}}]\label{defn:timed-H}
The {\bf
intrinsic 
timed-Hausdorff
distance}
between two compact timed-metric spaces,
$(X_i, d_i, \tau_i)$, $i=1,2$, is given by
\be \label{eq:tK-GH-1}
d_{\tau-H}
\Big((X_1,d_1,\tau_1),(X_2,d_2,\tau_2)\Big)=
\inf\, d^{\ell^\infty}_H(
\kappa_{\tau_1,X_1}(X_1),
\kappa_{\tau_2,X_2}(X_2))
\ee
where the infimum is taken over all possible 
timed-Fr\'echet maps,
$\kappa_{\tau_1,X_1}:X_1\to \ell^\infty$ and
$\kappa_{\tau_2,X_2}:X_2\to \ell^\infty$, which are
found by considering all countable dense collections of points in the $X_i$ and all reorderings of these collections of points.
\end{defn}

The intrinsic timed-Hausdorff 
distance satisfies the triangle inequality (see \cite[Proposition~4.4]{Che-Perales-Sormani-2025}) and is definite in the following sense. 

\begin{thm}[{\cite[Theorem 5.14]{SakSor-Notions}}]
\label{thm:definite-timed-H}
Let
$(X_i, d_i, \tau_i)$, $i=1,2$, be 
two compact timed-metric spaces.
Then, 
\be \label{eq:tau-K-definite-1}
d_{\tau-H}\Big((X_1,d_1,\tau_1),(X_2,d_2,\tau_2)\Big)=0
\ee
if and only if there is a 
distance and time preserving bijection
\be\label{eq:tau-K-definite-2}
\varphi: (X_1,d_1,\tau_1) \to (X_2,d_2,\tau_2).
\ee
\end{thm}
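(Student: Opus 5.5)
The plan is to prove the two implications separately. The ``if'' direction is a direct construction. The ``only if'' direction follows the classical argument that the Gromov--Hausdorff distance is definite on compact spaces, with one extra step to keep track of the time coordinate.

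\emph{If there is a distance and time preserving bijection $\varphi:X_1\to X_2$.} Take any countable dense set $\mathcal N=\{x_1,x_2,\ldots\}\subset X_1$. Because $\varphi$ is an isometric bijection, $\varphi(\mathcal N)=\{\varphi(x_1),\varphi(x_2),\ldots\}$, in this order, is countable and dense in $X_2$. For every $x\in X_1$ we then have
\[
\kappa_{\tau_2,X_2,\varphi(\mathcal N)}(\varphi(x))=\big(\tau_2(\varphi(x)),\,d_2(\varphi(x_i),\varphi(x))_i\big)=\big(\tau_1(x),\,d_1(x_i,x)_i\big)=\kappa_{\tau_1,X_1,\mathcal N}(x).
\]
So the two timed-Fr\'echet images coincide as subsets of $\ell^\infty$, and the infimum in \eqref{eq:tK-GH-1} equals $0$.

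\emph{If $d_{\tau-H}=0$.} Choose timed-Fr\'echet maps $\kappa^k_1,\kappa^k_2$ whose images are at Hausdorff distance less than $\varepsilon_k\to 0$. Define $f_k:X_1\to X_2$ by letting $f_k(x)$ be any point with $d_{\ell^\infty}(\kappa^k_1(x),\kappa^k_2(f_k(x)))<\varepsilon_k$. By Proposition~\ref{prop:tau-K-dist-pres} both maps are distance preserving into $\ell^\infty$, so
\[
|d_2(f_k x,f_k y)-d_1(x,y)|<2\varepsilon_k .
\]
The time function is one coordinate of the sup-norm, so it gives in addition
\[
|\tau_2(f_k x)-\tau_1(x)|<\varepsilon_k .
\]
Now fix a countable dense set $D\subset X_1$. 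Since $X_2$ is compact, a diagonal subsequence makes $f_k(x)$ converge for every $x\in D$; call the limit $f(x)$. Passing to the limit in the two estimates, $f$ preserves both $d$ and $\tau$ on $D$. Hence $f$ is $1$-Lipschitz on $D$ and extends uniquely to $X_1$. By continuity of $d_1,d_2$ and of the $1$-Lipschitz functions $\tau_1,\tau_2$, the extension $f:X_1\to X_2$ is still distance and time preserving. Exchanging the roles of the two spaces in the same construction gives a distance and time preserving map $g:X_2\to X_1$.

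The step I expect to be the main obstacle is bijectivity, since the limit construction by itself only yields isometric embeddings. I will handle it with the standard fact that a distance preserving self-map of a compact metric space is surjective. Applied to $f\circ g:X_2\to X_2$, this shows $f$ is surjective. Being distance preserving, $f$ is injective, so $f$ is the required distance and time preserving bijection. The surjectivity fact itself I would prove in the usual way. If some $y\notin h(X)$, set $\delta=\dist(y,h(X))>0$. The iterates $h^n(y)$ are pairwise at distance at least $\delta$, because $d(h^n y,h^m y)=d(h^{n-m}y,y)\ge\delta$ for $n>m$. This contradicts total boundedness of the compact space.
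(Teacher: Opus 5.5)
The paper gives no proof of this statement. It is imported from Sakovich--Sormani \cite[Theorem~5.14]{SakSor-Notions}, so there is no in-paper argument to compare against. Your proof is correct and self-contained.

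It is the classical argument that the Gromov--Hausdorff distance is definite on compact spaces: almost-isometries $f_k$, a diagonal limit on a countable dense set, extension by completeness, and surjectivity from the fact that a distance preserving self-map of a compact space is onto. Your one addition is the observation that $\tau$ is a coordinate of the sup-norm in the timed-Fr\'echet embedding. This gives $|\tau_2(f_k(x))-\tau_1(x)|<\varepsilon_k$, which survives the limit.

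The points you leave implicit are routine:
\begin{itemize}
\item $\delta>0$ because $h(X)$ is compact, hence closed.
\item $\varphi(\mathcal N)$ is dense because $\varphi$ is a surjective isometry. Surjectivity is also what makes the two timed-Fr\'echet images coincide.
\item $g$ only needs to be distance preserving for the bijectivity step.
\end{itemize}

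Your route also proves more than the citation states. The \emph{only if} half uses nothing about Fr\'echet maps except that the embeddings are distance preserving and that time is a $1$-Lipschitz coordinate of the ambient space. So it applies word for word to time and distance preserving maps into max-product spaces $[0,\tau_{max}]\times Z_k$, with $Z_k$ allowed to depend on $k$. It therefore proves definiteness of $d_{\tau-GH}$ directly. The paper obtains that only by combining the cited theorem with the bi-Lipschitz comparison of Theorem~\ref{thm:timed-H-equiv}.
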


We note that a map between two timed-metric spaces, $\varphi: (X_1,d_1,\tau_1) \to (X_2,d_2,\tau_2)$, is said to be
\textit{time preserving} if $\tau_2(\varphi(x))=\tau_1(x)$ for all $x\in X_1$.

\subsection{Correspondences}

Let us recall the notion of \textit{correspondences} between sets, which yields a standard characterization of bounds for the Hausdorff distance. The results here will be used in the proof of Theorem~\ref{thm:timed-H-equiv}.

\begin{defn}\label{defn:Corr}
Given two sets $A$ and $B$, 
a correspondence between them, denoted by $\mathcal{C}$, is a set $\mathcal{C} \subset A \times B$ such that 
\[
\forall a\in A,\,
\exists b\in B \, s.t. \,
 (a,b)\in {\mathcal C}
\]
and
\[
\forall b\in B,\,
\exists a\in A \, s.t. \,
(a,b)\in {\mathcal C}.
\]
Additionally, if $(X,d_X)$ and $(Y,d_Y)$ are metric spaces, such that 
$A \subset X$ and $B \subset Y$, the distortion of 
$\mathcal{C}$ is defined as 
\[
\dist(\mathcal C)= \sup \{|d_X(a,a') - d_Y(b,b') \, | \, (a,b), (a',b') \in \mathcal C \}.
\]
\end{defn}

\begin{defn}\label{defn:composition of correspondences}
Let $\mathcal{C}\subset A\times B$ and $\mathcal{C}'\subset B\times C$ be correspondences. Define $\mathcal{C}''\subset A\times C$ by
\[
\mathcal{C}'' = \{(a,c)\in A\times C: \exists b\in B\ \text{such that}\ (a,b)\in\mathcal{C}\ \text{and}\ (b,c)\in \mathcal{C}'\}.
\]
Then it is immediate to verify that $\mathcal{C}''$ is a correspondence between $A$ and $C$. This is the \textit{composition} of correspondences $\mathcal{C}$ and $\mathcal{C}'$, and we denote it by $\mathcal{C}'\circ \mathcal{C}$. 
\end{defn}

By the triangle inequality, it follows that 
\be\label{eq:dis of composition}
\dist(\mathcal{C}'\circ \mathcal{C}) \leq \dist(\mathcal{C}') + \dist(\mathcal{C}).
\ee

\begin{prop}\label{prop:Hausdorff}
Let $Z$ be a metric space. 
Given two subsets $A,B\subset Z$, it holds
\[
d_H^Z(A,B)<R
\]
if and only if
\[
\mathcal{C}=\Bigl\{(a,b) \,|\, d_Z(a,b)<R \Bigr\}
\subset A\times B
\]
is a correspondence between $A$ and $B$. Moreover, in this case $\dist(\mathcal C) < 2R$.
\end{prop}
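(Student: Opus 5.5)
We prove the two directions separately, starting from the definition of the Hausdorff distance recalled above. We take $R>0$ throughout.

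\emph{Forward direction.} Suppose first that $d_H^Z(A,B)<R$. By the definition of the infimum there is some $r<R$ in the admissible set, that is:
\begin{itemize}
\item for every $a\in A$ there is $b\in B$ with $d_Z(a,b)<r<R$;
\item for every $b\in B$ there is $a\in A$ with $d_Z(a,b)<r<R$.
\end{itemize}
These are exactly the two conditions of Definition~\ref{defn:Corr} for the set $\mathcal C=\{(a,b): d_Z(a,b)<R\}$, so $\mathcal C$ is a correspondence.

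\emph{Converse direction.} Now suppose $\mathcal C$ is a correspondence. Then every $a\in A$ has some $b\in B$ with $d_Z(a,b)<R$, and symmetrically every $b\in B$ has such an $a$. This shows that $R$ itself belongs to the admissible set in the definition of $d_H^Z$, so $d_H^Z(A,B)\le R$.

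This is the main obstacle: the definition of the infimum only gives $\le R$, whereas the strict inequality is what is claimed. Pointwise strict inequalities need not yield a uniform margin, so one cannot simply shrink $R$. I would handle this as follows.

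\begin{itemize}
\item If $A$ and $B$ are compact, which is the only case used later, the margin is uniform. The function $a\mapsto \dist(a,B)$ is continuous and is $<R$ at every point. Its maximum over the compact set $A$ is therefore some $r_1<R$. Similarly $\max_{b\in B}\dist(b,A)=r_2<R$. Any $r$ with $\max\{r_1,r_2\}<r<R$ is then admissible, which gives $d_H^Z(A,B)\le \max\{r_1,r_2\}<R$.
\item For general subsets, the converse can fail as stated. I would therefore add a compactness hypothesis, or else record only the non-strict conclusion $d_H^Z(A,B)\le R$.
\end{itemize}

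\emph{Distortion bound.} Take $(a,b),(a',b')\in\mathcal C$. The triangle inequality gives
\[
d_Z(a,a')\le d_Z(a,b)+d_Z(b,b')+d_Z(b',a')<d_Z(b,b')+2R,
\]
and symmetrically $d_Z(b,b')<d_Z(a,a')+2R$. Hence $|d_Z(a,a')-d_Z(b,b')|<2R$ for each pair.

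Taking the supremum over all pairs only gives $\dist(\mathcal C)\le 2R$ in general. For the strict bound I would again use compactness: the supremum is attained on the closure of $\mathcal C$, where $d_Z(a,b)\le r$ for some $r<R$ chosen as above. Alternatively, one can rerun the argument with the correspondence $\{(a,b): d_Z(a,b)<r\}$ for such an $r<R$, which gives $\dist\le 2r<2R$ for that smaller correspondence.
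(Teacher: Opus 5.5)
Your forward direction is correct, and so is your converse in the compact case. You are also right that the converse fails for arbitrary subsets: for instance $Z=\mathbb{R}$, $A=\{0\}$, $B=(0,1)$, $R=1$ gives $\mathcal C=A\times B$, which is a correspondence, but $d_H^Z(A,B)=1$. The paper states this proposition without proof and only ever uses the forward direction together with the distortion bound, so the converse issue is harmless.

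The genuine gap is your compactness argument for $\dist(\mathcal C)<2R$. The claim that $d_Z(a,b)\le r<R$ on $\overline{\mathcal C}$ is false. The $r$ you found bounds only $\inf_{b\in B}d_Z(a,b)$ and $\inf_{a\in A}d_Z(a,b)$, i.e.\ the \emph{nearest} partners. But $\mathcal C$ contains \emph{every} pair at distance $<R$, so its closure generally contains pairs at distance exactly $R$, and the maximum you obtain there is only $\le 2R$.

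No argument can repair this, because the strict bound fails for this particular $\mathcal C$ even when $A,B$ are compact. Take $Z=\mathbb{R}$, $A=B=[-1,1]$, $R=1$. Then $d_H^Z(A,B)=0<R$. For $0<\delta<1$ the pairs $(0,1-\delta)$ and $(0,-1+\delta)$ lie in $\mathcal C$ and give $|d_Z(0,0)-d_Z(1-\delta,-1+\delta)|=2-2\delta$, so $\dist(\mathcal C)=2=2R$.

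Two weaker statements are true. First, $\dist(\mathcal C)\le 2R$. Second, as in your alternative, for any $r$ with $d_H^Z(A,B)<r<R$, the set $\mathcal C_r=\{(a,b):d_Z(a,b)<r\}$ is a correspondence (by the forward direction, with no compactness needed) satisfying $\dist(\mathcal C_r)\le 2r<2R$. Either version suffices for the paper's use in the proof of Theorem~\ref{thm:timed-H-equiv}: work with $\mathcal C_r$, or enlarge $\epsilon$ slightly before invoking Lemma~\ref{lem:corr-K}. You should present this as a correction of the statement, not as a proof of it.
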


The following lemma will be used in the proof of Theorem~\ref{thm:timed-H-equiv} to establish the equivalence of different definitions of timed-Hausdorff distance between timed-metric spaces.

\begin{lem} \label{lem:corr-K}
Let $(X, d_X)$ and $(Y, d_Y)$ be two compact metric spaces.
If there is a correspondence $\mathcal C$ such that $\dist(\mathcal C) < R$, then there exist a pair of
Fr\'echet maps
$\kappa_X:X \to \ell^\infty$
and
$\kappa_Y:Y \to \ell^\infty$
such that for all $(x,y) \in \mathcal C$
\be \label{eq:corr-K}
d_{\ell^\infty}(\kappa_X(x),\kappa_Y(y))<R, 
\ee
and thus, $d_H^{\ell^\infty}(\kappa_X(X),\kappa_Y(Y))<R$.
\end{lem}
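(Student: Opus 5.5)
We choose countable dense sets in $X$ and $Y$ that are matched through $\mathcal C$, and then interleave them so that the two Fr\'echet maps use corresponding base points in each coordinate.

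\textbf{Construction.} Pick a countable dense set $\{a_1,a_2,\dots\}\subset X$ and a countable dense set $\{b'_1,b'_2,\dots\}\subset Y$; both exist because compact metric spaces are separable. For each $a_i$, use the first property of a correspondence to choose $b_i\in Y$ with $(a_i,b_i)\in\mathcal C$. For each $b'_i$, use the second property to choose $a'_i\in X$ with $(a'_i,b'_i)\in\mathcal C$. Now define the interleaved sequences
\[
x_{2i-1}=a_i,\quad x_{2i}=a'_i,\qquad y_{2i-1}=b_i,\quad y_{2i}=b'_i .
\]
Then $\mathcal N_X=\{x_k\}$ contains $\{a_i\}$ and is therefore dense in $X$. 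Likewise $\mathcal N_Y=\{y_k\}$ contains $\{b'_i\}$ and is dense in $Y$. By construction $(x_k,y_k)\in\mathcal C$ for every $k$. Let $\kappa_X=\kappa_{X,\mathcal N_X}$ and $\kappa_Y=\kappa_{Y,\mathcal N_Y}$ be the associated Fr\'echet maps of Definition~\ref{defn:Frechet}. Repetitions in these lists cause no problem: a Fr\'echet map only needs a dense sequence, and it is distance preserving regardless.

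\textbf{Coordinate estimate.} Fix $(x,y)\in\mathcal C$. For every $k$, both $(x_k,y_k)$ and $(x,y)$ lie in $\mathcal C$, so
\[
|d_X(x_k,x)-d_Y(y_k,y)|\le \dist(\mathcal C).
\]
Taking the supremum over $k$ gives
\[
d_{\ell^\infty}(\kappa_X(x),\kappa_Y(y))\le \dist(\mathcal C)<R .
\]
The inequality is strict because we take a supremum of quantities bounded by the fixed number $\dist(\mathcal C)$, which is itself strictly less than $R$. This strictness is the only delicate point, and it is why we bound by $\dist(\mathcal C)$ rather than by $R$ term by term.

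\textbf{Hausdorff bound.} Since $\mathcal C$ is a correspondence, every $\kappa_X(x)$ lies within $\dist(\mathcal C)$ of some $\kappa_Y(y)$, and every $\kappa_Y(y)$ lies within $\dist(\mathcal C)$ of some $\kappa_X(x)$. By the definition of Hausdorff distance (equivalently, Proposition~\ref{prop:Hausdorff}),
\[
d_H^{\ell^\infty}(\kappa_X(X),\kappa_Y(Y))\le \dist(\mathcal C)<R .
\]

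The main obstacle is ensuring that both base-point sequences are dense while remaining coordinatewise paired through $\mathcal C$. The interleaving construction above handles exactly this.
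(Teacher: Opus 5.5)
Your proof is correct and follows essentially the same route as the paper: you interleave a dense set of each space with $\mathcal C$-partners of a dense set of the other, so that the two Fr\'echet maps use $\mathcal C$-paired base points in every coordinate. The only cosmetic difference is that the paper introduces an intermediate $R'$ with $\dist(\mathcal C)<R'<R$ to secure strictness, whereas you bound each coordinate by $\dist(\mathcal C)$ directly, which works equally well.
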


\begin{proof}
Since $X$ and $Y$ are compact,
there exists $R'<R$ such that 
$\dist(\mathcal C)< R'$.
Let $\{x'_1,x'_2,...\} \subset X$
and $\{y'_1,y'_2,...\} \subset Y$
be countably dense collection of points.
For each $y_i'$ take
\be\label{eq:chxi}
x_{y_i'}\in X \, s.t.\, (x_{y_i'},y_i')\in \mathcal{C}\subset X\times Y 
\ee
and, analogously, for each $x_i'$ take
\be\label{eq:chyi}
y_{x_i'}\in Y \, s.t. \,  (x'_i,y_{x_i'})\in \mathcal{C}\subset X\times Y.
\ee
Define
\[
{\mathcal N}_X=\{x_1,x_2,x_3...\}=\{x_1',x_{y_1'},x_2',x_{y_2'},
x_3',x_{y_3'},...\}\subset X
\]
and 
\[
{\mathcal N}_Y=\{y_1,y_2,y_3,....\}=
\{y_{x_1'},y_1',y_{x_2'}, y_2',
y_{x_3'},y_3',...\}\subset Y.
\]
Then the Fr\'echet maps $\kappa_{X}= \kappa_{X,{\mathcal N}_X}:X\to \ell^\infty$ 
and 
$\kappa_{Y}=\kappa_{Y,{\mathcal N}_Y}:Y\to \ell^\infty$  are given by 
\begin{eqnarray*}
\kappa_X(x)&=&(d_X(x_1',x),d_X(x_{y_1'},x),d_X(x_2',x),
d_X(x_{y_2'},x),\dots),\\
\kappa_Y(y)&=&
(d_Y(y_{x_1'},y),d_Y(y_1',y),
d_Y(y_{x_2'},y), d_Y(y_2',y),\dots).
\end{eqnarray*}
If $(x,y)\in\mathcal{C}$, 
then  by \eqref{eq:chxi} the odd coordinates of 
$\kappa_X(x)-\kappa_Y(y)\in \ell^\infty$ satisfy 
\[
|d_X(x_i',x)-d_Y(y_{x_i'},y)| 
\leq R'.
\]
Analogously, by 
\eqref{eq:chyi} the even coordinates satisfy
\[
|d_X(x_{y_i'},x)-d_Y(y_i',y)|\leq R'.
\]
Hence, each coordinate of 
$\kappa_X(x)-\kappa_Y(y)\in \ell^\infty$, 
is smaller than or equal to $R'$. 
Therefore, 
\[
d_{\ell^\infty}(\kappa_X(x),\kappa_Y(y))\le R'
<R.
\]
From this, and recalling that $\mathcal C$ is a correspondence, we get 
$d_H^{\ell^\infty}(\kappa_X(X),\kappa_Y(Y))<R$.
\end{proof}

\subsection{Che--Perales--Sormani's compactness theorem}\label{sect:p1}

Recall that a sequence $\{(X_j,d_j)\}_{j\in\mathbb{N}}$ of metric spaces is \textit{equibounded} if the diameters are uniformly bounded
\be\label{eq:equibounded-B}
\exists D>0 \,s.t.\, \forall j\in\mathbb{N} : \diam_{d_j}(X_j)\le D,
\ee
and \textit{equicompact} if the spaces are uniformly compact
\be\label{eq:equicompact-B}
\forall R\in (0,D]\,
\,\exists N(R) \in \mathbb N \,s.t.\, \forall j\in\mathbb{N}\,\,
\exists x^j_{R,i}\in X_j\,\, s.t.\,\,
X_j\subset \bigcup_{i=1}^{N(R)} B_{d_j}(x^j_{R,i},R).
\ee

\begin{prop}[\cite{Che-Perales-Sormani-2025}]\label{prop:selection}
Let $\{(X_j,d_j)\}_{j\in\mathbb{N}}$ be an equibounded and equicompact sequence of metric spaces. Define $\varepsilon_i = 2^{-i}$ and $N_i=N(\varepsilon_i)$. 
\bigskip
For each $i\in {\mathbb N}$, let $A_i$ be the finite set, 
\be \label{eq:index}
A_i= \Bigl\{(a_1, a_2, \ldots, a_i): \, 1 \leq a_j \leq N_i, j=1,\ldots, i \Bigr\},
\ee
$p_i: A_{i+1} \to A_i$ the projection maps
$p_i(a_1,\ldots,a_{i+1})=(a_1,\ldots,a_{i})$
and
\be\label{eq:A-cntble}
A=\bigsqcup_{i=1}^{\infty }A_i.
\ee
Then there exist countable dense subsets 
\be
{\mathcal{N}}_j = \{x_a^j:\, a\in A\} \subset X_j
\ee
and maps
\be\label{eq:Iji}
I^j_i \colon  A_i \to X_j \quad\textrm{ and }\quad I^j\colon  A\to X_j,
\ee
given by
\be
I^j(a)=I^j_i(a)=x_a^j \in X_j \qquad \forall a \in A_i \subset A, 
\ee
such that 
\be \label{eq:cover}
X_j \subset \bigcup_{a\in A_i} B_{d_j}(x^j_a,\varepsilon_i)
\ee
and 
\be \label{eq:in-0}
I^j_{i+1}(a) \in B_{d_j}(I^j_i(p_i(a)), 2\varepsilon_i)\subset X_j.
\ee
\end{prop}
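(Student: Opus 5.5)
The plan is a diagonal/inductive construction using the equicompactness hypothesis \eqref{eq:equicompact-B} at every scale $\varepsilon_i=2^{-i}$. First, for each $j$ and each $i$, \eqref{eq:equicompact-B} gives points $x^j_{\varepsilon_i,1},\dots,x^j_{\varepsilon_i,N_i}\in X_j$ whose $\varepsilon_i$-balls cover $X_j$. We may assume $N_i$ is nondecreasing in $i$ (replacing $N_i$ by $\max_{k\le i}N_k$ and repeating points if needed), and that $N_i\ge 1$ (the spaces are nonempty). The maps $I^j_i$ will be defined by induction on $i$; the labels are arranged so that each point at level $i+1$ is attached to a parent at level $i$ lying within $2\varepsilon_i$.

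For the base case $i=1$, define $I^j_1(a_1)=x^j_{\varepsilon_1,a_1}$ for $1\le a_1\le N_1$. For indices $a_1>N_1$ (if $A_1$ was defined with a larger range), repeat some point of the cover. This gives \eqref{eq:cover} at $i=1$, and \eqref{eq:in-0} is vacuous there.

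For the inductive step, suppose $I^j_i\colon A_i\to X_j$ has been constructed so that its image covers $X_j$ by $\varepsilon_i$-balls. Fix $b=(a_1,\dots,a_i)\in A_i$ and let $y=I^j_i(b)$. Let $S_b$ be the set of cover points $x^j_{\varepsilon_{i+1},k}$, $1\le k\le N_{i+1}$, with $d_j(x^j_{\varepsilon_{i+1},k},y)<2\varepsilon_i$. Define $I^j_{i+1}(b,a_{i+1})$ by enumerating $S_b$ along $a_{i+1}=1,\dots,N_{i+1}$, padding with $y$ itself when $S_b$ has fewer elements, so that the map is onto $S_b\cup\{y\}$. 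The exact index ranges must match \eqref{eq:index}; any surplus is absorbed by repetition, and since $|S_b|\le N_{i+1}$ there is enough room.

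With this choice, \eqref{eq:in-0} holds by construction, because every image point lies within $2\varepsilon_i$ of $I^j_i(p_i(a))$ (or equals it). The main point is the covering \eqref{eq:cover} at level $i+1$. Take $x\in X_j$ and pick $k$ with $d_j(x,x^j_{\varepsilon_{i+1},k})<\varepsilon_{i+1}$. Then pick $b\in A_i$ whose level-$i$ point $y$ satisfies $d_j(x,y)<\varepsilon_i$ (this exists by the induction hypothesis). By the triangle inequality,
\[
d_j(x^j_{\varepsilon_{i+1},k},y)<\varepsilon_{i+1}+\varepsilon_i<2\varepsilon_i,
\]
so $x^j_{\varepsilon_{i+1},k}\in S_b$ and it is hit by some $(b,a_{i+1})$. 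Hence the level-$(i+1)$ points cover $X_j$ by $\varepsilon_{i+1}$-balls.

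Finally, set $x^j_a=I^j(a)$ for $a\in A$ and $\mathcal N_j=\{x^j_a\}$. This set is countable because $A$ is a countable disjoint union of finite sets. It is dense because \eqref{eq:cover} holds for every $\varepsilon_i\to0$.

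The step I expect to be the main obstacle is the bookkeeping of the index sets. The bound $N_i$ on each coordinate must be large enough to enumerate the children $S_b$, which requires $N_{i+1}\le N_i$-type compatibility or a re-indexing. If the ranges in \eqref{eq:index} turn out to be mismatched, I would first redefine $N_i$ as a running maximum; if that is not enough, I would bound $|S_b|$ by $N_{i+1}$ and embed the children into the available coordinate range, filling the remaining slots by repetition.
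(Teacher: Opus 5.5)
Your argument is correct. The inductive tree construction takes as children of $b\in A_i$ the level-$(i+1)$ net points within $2\varepsilon_i$ of $I^j_i(b)$. The estimate $\varepsilon_{i+1}+\varepsilon_i<2\varepsilon_i$ then carries the covering \eqref{eq:cover} from level $i$ to level $i+1$, and this is exactly what is needed. The paper does not prove this proposition; it quotes it from Che--Perales--Sormani.

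The obstacle you anticipate in your last paragraph is misdiagnosed, and the fallback you propose would not help. Enumerating the children never requires any relation between $N_i$ and $N_{i+1}$: the new coordinate $a_{i+1}$ ranges over $\{1,\dots,N_{i+1}\}$ and $|S_b|\le N_{i+1}$. You also never use that $N_i$ is nondecreasing.

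The only real bookkeeping issue sits in the displayed definition \eqref{eq:index} itself. If every coordinate of an element of $A_i$ were bounded by $N_i$, then $p_i$ would not map $A_{i+1}$ into $A_i$ once $N_{i+1}>N_i$. For example, $p_i(N_{i+1},1,\dots,1)\notin A_i$, so \eqref{eq:in-0} would not even make sense. The intended reading is $A_i=\{1,\dots,N_1\}\times\cdots\times\{1,\dots,N_i\}$. Then $A_{i+1}=A_i\times\{1,\dots,N_{i+1}\}$, and your $I^j_{i+1}(b,a_{i+1})$ is defined on exactly $A_{i+1}$. Replacing $N_i$ by a running maximum cannot rescue the literal reading, since it forces $N_{i+1}\ge N_i$, the wrong direction. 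That fallback should be dropped.

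Two small remarks. First, you can avoid enumerating and padding $S_b$. Set $I^j_{i+1}(b,k)=x^j_{\varepsilon_{i+1},k}$ if $d_j(x^j_{\varepsilon_{i+1},k},I^j_i(b))<2\varepsilon_i$, and $I^j_{i+1}(b,k)=I^j_i(b)$ otherwise. Then \eqref{eq:in-0} is immediate. Every net point $x^j_{\varepsilon_{i+1},k}$ is in the image, because by the level-$i$ covering it lies within $\varepsilon_i<2\varepsilon_i$ of some $I^j_i(b)$. Second, \eqref{eq:equicompact-B} only supplies $N(R)$ for $R\le D$. For $\varepsilon_i>D$ a single ball covers $X_j$, so you may take $N(\varepsilon_i)=1$.
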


\begin{thm}
[Timed Gromov's Compactness Theorem, \cite{Che-Perales-Sormani-2025}]\label{thm:timed-Gromov-compactness-CPS}
If $\{(X_j,d_j,\tau_j)\}_{j\in\mathbb{N}}$ is a equibounded and equicompact sequence of compact timed-metric spaces that have a uniform bound on their $1$-Lipschitz functions:
\[
\tau_j(X_j)\subset [0,\tau_{max}],
\]
then a subsequence converges in the intrinsic timed-Hausdorff sense to a compact timed-metric space $(X_\infty, d_\infty, \tau_\infty)$. In fact, 
up to a subsequence, 
there exist distance and time preserving 
timed-Fr\'echet maps,
\[
\varphi_j= \kappa_{\tau_j,\mathcal N_j}\colon  X_j \to [0,\tau_{max}]\times Z \subset \ell^\infty,
\]
where $Z$ is compact, such that 
\be \label{eq:H-A}
d_H^{[0,\tau_{max}]\times Z}(\varphi_j(X_j),\varphi_\infty(X_\infty))\to 0.
\ee
\end{thm}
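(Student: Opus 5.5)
The plan is to reduce Theorem~\ref{thm:timed-Gromov-compactness-CPS} to Gromov's classical compactness and embedding theorem, using the address construction of Proposition~\ref{prop:selection} to pick compatible dense sets, and then pass to the limit coordinatewise in $\ell^\infty$.

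First, apply Proposition~\ref{prop:selection} to the sequence $(X_j,d_j)$. This gives countable dense sets $\mathcal N_j=\{x^j_a: a\in A\}$, all indexed by the same countable set $A$. We enumerate $A$ once and for all and use this common ordering to build the timed-Fréchet maps $\kappa_{\tau_j,X_j,\mathcal N_j}$. By Proposition~\ref{prop:tau-K-dist-pres} these maps are distance and time preserving into $[0,\tau_{max}]\times\ell^\infty$.

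Second, since $A$ is countable and each $d_j(x^j_a,x^j_b)$ lies in $[0,D]$, a diagonal argument produces a subsequence along which
\[
d_j(x^j_a,x^j_b)\to d_\infty(a,b) \quad\text{and}\quad \tau_j(x^j_a)\to t_\infty(a)
\]
for all $a,b\in A$. Then $d_\infty$ is a pseudometric on $A$ and $t_\infty$ is $1$-Lipschitz with respect to it. Let $X_\infty$ be the completion of the metric quotient. By \eqref{eq:cover}, the points of $A_i$ form an $\varepsilon_i$-net in the limit, so $X_\infty$ is totally bounded and hence compact. The function $\tau_\infty$ extends by uniform continuity, and $X_\infty$ is covered by the same addresses in the sense of \eqref{eq:cover} and \eqref{eq:in-0}.

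Third, take $\mathcal N_\infty$ to be the images of the points of $A$, with the same ordering, and set $\varphi_\infty=\kappa_{\tau_\infty,X_\infty,\mathcal N_\infty}$. Let $Z$ be the closure of the union of all $\kappa_{X_j}(X_j)$ together with $\kappa_{X_\infty}(X_\infty)$. The main obstacle is showing that $Z$ is compact and that the Hausdorff distances tend to zero, since $\ell^\infty$ convergence must be uniform over infinitely many coordinates. For this I would use the addresses. Every coordinate indexed by $b\in A_k$ with $k>i$ is, up to an error of about $2\varepsilon_i$ (summing the geometric series in \eqref{eq:in-0}), controlled by the coordinate of its ancestor in $A_i$. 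So for $x\in X_j$ near $x^j_a$ and a limit point near $a$, the sup over all coordinates is bounded by the max over the finitely many coordinates in $A_1\cup\dots\cup A_i$ plus $C\varepsilon_i$. This uses $\varepsilon_i$-closeness of $x$ to some $x^j_a$ with $a\in A_i$. On those finitely many coordinates convergence is pointwise, which gives $d_H\to0$.

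The same tail estimate shows that $Z$ is totally bounded. Each point of $Z$ is within $C\varepsilon_i$ of a point determined by finitely many bounded coordinates in $[0,D]$, which can be discretized. Being closed in the complete space $\ell^\infty$, $Z$ is then compact. Finally, the times are included as the $[0,\tau_{max}]$ factor: $|\tau_j(x)-\tau_\infty(y)|$ is controlled by $|\tau_j(x^j_a)-t_\infty(a)|+2\varepsilon_i$ via the Lipschitz property, which yields \eqref{eq:H-A} in the max-product metric.
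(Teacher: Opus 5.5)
Your proposal is correct. Note that the paper does not prove this statement; it is quoted from \cite{Che-Perales-Sormani-2025}. The nearest argument in the paper is the proof of Theorem~\ref{thm:timed-GH-compactness} via Gromov's embedding, Theorem~\ref{thm-GHcriterion}, so the comparison below is with that.

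\textbf{How the paper's route works.} It first builds the space $F$ of functions on $A$ satisfying $f(A_1)\subset[0,D]$ and \eqref{eq-fcont}, and proves $F$ compact a priori by a diagonal argument. It then embeds each $X_j$ by $h_j(x)(a)=d_j(x,x^j_a)$. This is exactly your Fr\'echet map $\kappa_{X_j}$ indexed by $A$. Finally, it extracts the limit by Blaschke's theorem in $[0,\tau_{max}]\times F$ and afterwards identifies the Hausdorff limit as the graph of a $1$-Lipschitz time function over $Y_\infty$.

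\textbf{How your route differs.} You build the limit intrinsically, from the limiting distance matrix and times on $A$ followed by completion. You then prove compactness of $Z$ and the Hausdorff convergence a posteriori by the ``finitely many coordinates plus tail'' estimate. That tail estimate is the same mechanism that makes $F$ compact in the appendix.

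\textbf{What each buys.} Your route delivers exactly what separates this statement from Theorem~\ref{thm:timed-GH-compactness}. Your $\varphi_\infty$ is a genuine timed-Fr\'echet map, built on the same index set $A$ as the $\mathcal N_j$, so intrinsic timed-Hausdorff convergence follows at once. This is also the structure the address-based statements such as \eqref{eq:alpha-A} rely on. In the paper's route the limit spatial map is merely the inclusion $Y_\infty\subset F$. To recognize it as a Fr\'echet map one needs an extra diagonal step: show $f(a)=\|f-f_a\|$ with $f_a=\lim_j h_j(x^j_a)$, and show $\{f_a\}$ is dense in $Y_\infty$. The paper's route, in exchange, gets existence of the limit set for free from a fixed compact ambient space, rather than verifying total boundedness of $Z$ by hand.

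\textbf{Minor slips, none affecting the argument.}
\begin{itemize}
\item The accumulated ancestor error from \eqref{eq:in-0} is $\sum_{k\ge i}2\varepsilon_k=4\varepsilon_i$, not $2\varepsilon_i$. Your constant $C$ absorbs this.
\item In the limit, \eqref{eq:cover} and \eqref{eq:in-0} hold only with non-strict inequalities, and with a slightly enlarged radius for points of the completion.
\item $\mathcal N_\infty$ may contain repetitions when distinct indices collapse. A Fr\'echet map built on a collection with repetitions is still distance preserving, as is already implicit in Lemma~\ref{lem:corr-K}.
\end{itemize}
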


\bigskip

With the notation introduced in Proposition~\ref{prop:selection}, 
in combination with the following notion, introduced in \cite[Definition~3.3]{Che-Perales-Sormani-2025}, we can get a better description of the Hausdorff convergence above, in terms of uniform convergence. 

\begin{defn}\label{defn:addresses}
Using the notation from Proposition~\ref{prop:selection}, let $\mathcal{A}$ be the set of ordered sequences
\[
\alpha=\{\alpha_1,\alpha_2,\alpha_3,...\} \subset A
\]
that satisfy $\alpha_i \in A_i\subset A$ and 
$p_i(\alpha_{i+1})=\alpha_i$ for all $i \in {\mathbb N}$. We call elements of $\mathcal{A}$ {\bf addresses}.
\end{defn}

Thus, besides \eqref{eq:H-A}, by \cite[Proposition~3.4]{Che-Perales-Sormani-2025} the following holds: 
\be \label{eq:alpha-A}
\lim_{j \to \infty}\sup_{\alpha \in {\mathcal A}}
d_{\ell^\infty}
\Big(\varphi_j({\mathcal I}^j(\alpha)),
\varphi_\infty({\mathcal I}^\infty(\alpha)) \Big) = 0,
\ee
for some ${\mathcal I}^j\colon{\mathcal A}\to X_j$ surjective maps.  
In particular, from  \eqref{eq:alpha-A}, uniform convergence of time and distances,
\be \label{eq:sup-A}
\lim_{j \to \infty}\sup_{\alpha \in {\mathcal A}}
|\tau_j({\mathcal I}^j(\alpha))-
\tau_\infty({\mathcal I}^\infty(\alpha))|= 0,
\ee
\be \label{eq:sup-A-d}
\lim_{j \to \infty}\sup_{\alpha,\alpha' \in {\mathcal A}}
|d_j({\mathcal I}^j(\alpha),{\mathcal I}^j(\alpha'))-
d_\infty({\mathcal I}^\infty(\alpha),{\mathcal I}^\infty(\alpha'))|= 0,
\ee
immediately follow.

\section{Timed-Gromov--Hausdorff distance}\label{sec-timedGH}
Here we prove 
 that  $d_{\tau-H}$
is bi-Lipschitz equivalent to the
$d_{\tau-GH}$.
 We also show that $d_{\tau-GH}$ satisfies the triangle inequality, and establish a compactness theorem for 
 $d_{\tau-GH}$.

\subsection{Distance and time preserving maps into max-product spaces}

Recall the notion of a max-product space from Definition~\ref{def:product space}. The following lemma will be used to establish that $d_{\tau\text{-}GH}$ satisfies the triangle inequality. Moreover, observe that this lemma implies that the class of functions considered in the definition of $d_{\tau\text{-}GH}$ is larger than the class consisting of all functions $(\tau,\zeta): X \to [0,\tau_{\max}] \times Z$ for which $\zeta$ is distance-preserving.

\begin{lem}\label{lem:zeta-dist-pres}
Let $(X,d,\tau)$ be a compact timed-metric space, 
$\tau_{max}= \max \{\tau(x) \, | \, x \in X\}$,
$Z$ a compact metric space
and
\be
\varphi:X\to [0,\tau_{max}]\times Z
\ee
a distance and time preserving map into the
max-product space, $[0,\tau_{max}]\times Z$.
Then $\varphi =(\tau, \zeta)$, where 
$\zeta:X\to Z$ is $1$-Lipschitz. Moreover, $d_Z(\zeta(p), \zeta(q))=d(p,q)$ for any $p,q \in X$ non causally related.
\end{lem}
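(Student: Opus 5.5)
Write $\varphi(x)=(\varphi^1(x),\zeta(x))\in[0,\tau_{max}]\times Z$. Time preservation means $\tau_{prod}(\varphi(x))=\tau(x)$. By \eqref{eq:max-prod1}, $\tau_{prod}$ is the first coordinate, so $\varphi^1=\tau$ and $\varphi=(\tau,\zeta)$.

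Next, distance preservation combined with \eqref{eq:max-prod-2} gives, for all $p,q\in X$,
\[
d(p,q)=d_{prod}(\varphi(p),\varphi(q))=\max\bigl\{|\tau(p)-\tau(q)|,\,d_Z(\zeta(p),\zeta(q))\bigr\}.
\]
Consequently $d_Z(\zeta(p),\zeta(q))\le d(p,q)$, which is exactly the statement that $\zeta$ is $1$-Lipschitz.

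For the last claim, take $p,q$ not causally related. By Definition~\ref{defn:causal-structure} this means $\tau(p)-\tau(q)\neq d(p,q)$ and $\tau(q)-\tau(p)\neq d(p,q)$, that is, $|\tau(p)-\tau(q)|\neq d(p,q)$. Since $\tau$ is $1$-Lipschitz, $|\tau(p)-\tau(q)|\le d(p,q)$, and so the inequality is strict: $|\tau(p)-\tau(q)|<d(p,q)$. In the displayed identity the maximum therefore cannot be achieved by the time term. It must be achieved by the other term, giving $d_Z(\zeta(p),\zeta(q))=d(p,q)$.

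The argument has no real obstacle. The only subtle point is using the Lipschitz bound on $\tau$ to turn ``not causally related'' into the strict inequality $|\tau(p)-\tau(q)|<d(p,q)$. Note also that compactness of $X$ and $Z$ plays no role beyond making $\tau_{max}$ well defined.
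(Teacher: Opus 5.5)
Your proposal is correct and follows essentially the same route as the paper. You read off $\varphi=(\tau,\zeta)$ from time preservation, get $1$-Lipschitzness of $\zeta$ from the max-product formula, and use the $1$-Lipschitz bound on $\tau$ to turn ``not causally related'' into the strict inequality $|\tau(p)-\tau(q)|<d(p,q)$, which forces the maximum to be attained by the $d_Z$ term.
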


\begin{proof}
By the definition of a product space,
we know that $\varphi(x)=(\tau_Z(x),\zeta_Z(x))$
where $\tau_Z:X\to [0,\tau_{max}]$
and $\zeta_Z:X\to Z$.  By the definition of $\tau_{prod}$ and since
$\varphi$ is time preserving, we have
\be
\tau_Z(x)=\tau_{prod}(\tau_Z(x),\zeta_Z(x)) =\tau(x).
\ee
Moreover, 
\[
d_Z(\zeta_Z(x),\zeta_Z(y)) \leq d_{prod}(\varphi(x),\varphi(y))=
d_X(x,y),
\]
which proves that $\zeta_Z$ is $1$-Lipschitz.

For the last claim, assume that $p,q\in X$ are not causally related. This is equivalent, by the $1$-Lipschitzness of $\tau$, to
\[
|\tau(p)-\tau(q)| < d(p,q).
\]
Since $d(p,q) = d_{prod}(\varphi(p),\varphi(q))$, it follows that
\[
d(p,q) = d_Z(\zeta_Z(p),\zeta_Z(q)).\qedhere
\]
\end{proof}

\subsection{Equivalence}\label{ss:equivalent notion}

Here we prove that $d_{\tau-H}$
is bi-Lipschitz equivalent to 
$d_{\tau-GH}$.

\begin{thm}\label{thm:timed-H-equiv}
Let $(X_j,d_j,\tau_j)$, $j=1,2$, be
two compact timed-metric spaces,
then we have the following inequalities
between the timed-Hausdorff distance of
Definition~\ref{defn:timed-H} 
and the timed-Gromov--Hausdorff distance of Definition~\ref{defn:t-H-new}:
\be\label{eq:timed-H-equiv-1}
d_{\tau-H}((X_1,d_1,\tau_1),(X_2,d_2,\tau_2))\ge
d_{\tau-GH}((X_1,d_1,\tau_1),(X_2,d_2,\tau_2))
\ee
and
\be\label{eq:timed-H-equiv-2}
d_{\tau-H}((X_1,d_1,\tau_1),(X_2,d_2,\tau_2))\le
2
d_{\tau-GH}((X_1,d_1,\tau_1),(X_2,d_2,\tau_2)).
\ee
\end{thm}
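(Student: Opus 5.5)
For \eqref{eq:timed-H-equiv-1}, we show that every admissible pair of timed-Fr\'echet maps is also admissible in the infimum of Definition~\ref{defn:t-H-new}. Fix timed-Fr\'echet maps $\kappa_{\tau_j,X_j}=(\tau_j,\kappa_{X_j})$. By Proposition~\ref{prop:tau-K-dist-pres} they are distance preserving, and they are time preserving by construction. We take $Z$ to be the compact set $\kappa_{X_1}(X_1)\cup\kappa_{X_2}(X_2)\subset\ell^\infty$; it is compact as a union of two isometric images of compact spaces. The identification $[0,\tau_{max}]\times\ell^\infty\subset\ell^\infty$, with the time in an extra coordinate and the sup norm, is exactly the max-product metric $d_{prod}$. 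Hence the Hausdorff distance of the images in $[0,\tau_{max}]\times Z$ is at most their Hausdorff distance in $\ell^\infty$; in fact the two coincide, since the sets lie in this subset. Taking the infimum gives $d_{\tau-GH}\le d_{\tau-H}$.

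For \eqref{eq:timed-H-equiv-2}, suppose $d_{\tau-GH}<R$. Then there are a compact $Z$ and distance and time preserving maps $\varphi_j=(\tau_j,\zeta_j)$ (by Lemma~\ref{lem:zeta-dist-pres}) with $d_H(\varphi_1(X_1),\varphi_2(X_2))<R$. By Proposition~\ref{prop:Hausdorff}, the set $\mathcal C=\{(x_1,x_2): d_{prod}(\varphi_1(x_1),\varphi_2(x_2))<R\}$ is a correspondence. Since the $\varphi_j$ are distance preserving, $\dist(\mathcal C)<2R$, where distortion is measured with $d_1,d_2$. Moreover, every $(x_1,x_2)\in\mathcal C$ satisfies $|\tau_1(x_1)-\tau_2(x_2)|<R$, because the time coordinate is bounded by $d_{prod}$.

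Next we apply Lemma~\ref{lem:corr-K} to $\mathcal C$ with bound $2R$. This produces Fr\'echet maps $\kappa_{X_1},\kappa_{X_2}$ with $d_{\ell^\infty}(\kappa_{X_1}(x_1),\kappa_{X_2}(x_2))<2R$ for all $(x_1,x_2)\in\mathcal C$. The corresponding timed-Fr\'echet maps then satisfy
\[
d_{\ell^\infty}\big(\kappa_{\tau_1,X_1}(x_1),\kappa_{\tau_2,X_2}(x_2)\big)
=\max\{|\tau_1(x_1)-\tau_2(x_2)|,\,d_{\ell^\infty}(\kappa_{X_1}(x_1),\kappa_{X_2}(x_2))\}<2R.
\]
Since $\mathcal C$ is a correspondence, the Hausdorff distance between $\kappa_{\tau_1,X_1}(X_1)$ and $\kappa_{\tau_2,X_2}(X_2)$ in $\ell^\infty$ is less than $2R$. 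Letting $R\downarrow d_{\tau-GH}$ gives the factor $2$.

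The step needing most care is the distortion estimate. The maps $\varphi_j$ are isometric for $d_{prod}$ but the $\zeta_j$ alone need not be, so the estimate must use the full $\varphi_j$, for which the triangle inequality in $d_{prod}$ gives $|d_1(x_1,x_1')-d_2(x_2,x_2')|<2R$ directly. We also need the strict-inequality bookkeeping in Lemma~\ref{lem:corr-K}, and we must check that the time coordinate's placement in $\ell^\infty$ matches Definition~\ref{defn:tau-K}, which holds by the identification used there.
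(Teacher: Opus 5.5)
Your proposal is correct and follows essentially the same route as the paper. The first inequality comes from viewing timed-Fr\'echet maps as admissible maps into $[0,\tau_{max}]\times(\kappa_{X_1}(X_1)\cup\kappa_{X_2}(X_2))$. The second comes from turning the Hausdorff bound into a correspondence of distortion at most $2R$, applying Lemma~\ref{lem:corr-K}, and noting that the time coordinates already differ by less than $R$.

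One small bookkeeping point, which the paper's Proposition~\ref{prop:Hausdorff} shares: the pointwise bounds $|d_1(x_1,x_1')-d_2(x_2,x_2')|<2R$ only give $\dist(\mathcal C)\le 2R$ for the supremum. To get the strict bound that Lemma~\ref{lem:corr-K} needs, build the correspondence at a level $R'$ with $d_H(\varphi_1(X_1),\varphi_2(X_2))<R'<R$; this gives $\dist\le 2R'<2R$, and the rest of your argument goes through unchanged.
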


We prove Theorem~\ref{thm:timed-H-equiv} using the following lemma, which establishes the easy direction.

\begin{lem}\label{lem:timed-H-equiv-1}
For any pair of compact timed-metric spaces 
$(X_j,d_j,\tau_j)$, $j=1,2$, the inequality \eqref{eq:timed-H-equiv-1} holds, that is,
\be
d_{\tau-H}((X_1,d_1,\tau_1),(X_2,d_2,\tau_2))\ge
d_{\tau-GH}((X_1,d_1,\tau_1),(X_2,d_2,\tau_2)).
\ee
\end{lem}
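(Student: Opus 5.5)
The plan is to show that every pair of timed-Fr\'echet maps admissible in Definition~\ref{defn:timed-H} can be turned into an admissible pair in Definition~\ref{defn:t-H-new} with the same Hausdorff distance between the images. Taking the infimum over timed-Fr\'echet pairs then gives \eqref{eq:timed-H-equiv-1}. So fix countable dense sets $\mathcal N_1\subset X_1$, $\mathcal N_2\subset X_2$ and the associated timed-Fr\'echet maps $\kappa_{\tau_j,X_j}=(\tau_j,\kappa_{X_j})$. Here $\kappa_{X_j}:X_j\to\ell^\infty$ is the Fr\'echet map of Definition~\ref{defn:Frechet}, and $[0,\tau_{max}]\times\ell^\infty$ is identified with a subset of $\ell^\infty$ by putting the time as an extra first coordinate, so the $\ell^\infty$ metric on the pair is exactly $d_{prod}$ with $d_Z=d_{\ell^\infty}$.

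The first step is to choose the compact space $Z$. Since $\ell^\infty$ itself is not compact, take
\[
Z=\kappa_{X_1}(X_1)\cup\kappa_{X_2}(X_2)\subset\ell^\infty
\]
with the restricted metric $d_{\ell^\infty}$. Each $\kappa_{X_j}$ is distance preserving, hence continuous, so $\kappa_{X_j}(X_j)$ is compact and $Z$ is compact as a finite union of compact sets. Set $\tau_{max}=\max\{\tau_{1,\max},\tau_{2,\max}\}$. Then $\varphi_j:=(\tau_j,\kappa_{X_j}):X_j\to[0,\tau_{max}]\times Z$ is well defined and time preserving. By Proposition~\ref{prop:tau-K-dist-pres} it is distance preserving for $d_{prod}(\,(t,z),(t',z')\,)=\max\{|t-t'|,d_{\ell^\infty}(z,z')\}$, which is the $\ell^\infty$ distance under the identification above. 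So the pair $(\varphi_1,\varphi_2)$ is admissible in Definition~\ref{defn:t-H-new}.

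The second step is to compare Hausdorff distances. Because the metric of $[0,\tau_{max}]\times Z$ coincides with the restriction of $d_{\ell^\infty}$ to $\varphi_1(X_1)\cup\varphi_2(X_2)$, and the Hausdorff distance only uses distances between points of the two sets, we get
\[
d_H^{[0,\tau_{max}]\times Z}(\varphi_1(X_1),\varphi_2(X_2))=d_H^{\ell^\infty}(\kappa_{\tau_1,X_1}(X_1),\kappa_{\tau_2,X_2}(X_2)).
\]
Hence $d_{\tau-GH}$ is at most the right-hand side. Taking the infimum over all timed-Fr\'echet maps yields the lemma.

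The main point to check is the identification of the $\ell^\infty$ distance of $(\tau,\kappa)$ with the max-product distance, that is, that the sup over coordinates splits as a max of the time coordinate and the remaining sup. This is immediate from \eqref{eq:d-ell-infty}. Compactness of $Z$ is the only other requirement, and it is handled by taking the union of the images rather than all of $\ell^\infty$.
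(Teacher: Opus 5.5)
Your proof is correct and takes essentially the same approach as the paper. You take $Z=\kappa_{X_1}(X_1)\cup\kappa_{X_2}(X_2)$, note that the timed-Fr\'echet maps are then admissible time and distance preserving maps into $[0,\tau_{max}]\times Z$ with the same Hausdorff distance between the images, and pass to the infimum; your checks that the $\ell^\infty$ metric splits as the max-product metric and that the Hausdorff distance depends only on the two image sets fill in details the paper leaves implicit.
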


\begin{proof}
By Proposition 4.24 and Proposition 4.25 in \cite{SakSor-Notions}, we know that any timed-Fr\'echet map, $\kappa_{X_j,\tau_j}:X_j\to \ell^\infty$,
\be
\kappa_{X_j,\tau_j}(x)=(\tau_j(x),\kappa_{X_j}(x))\in [0,\tau_{j,\max}]\times \ell^\infty
\ee
where $\tau_{j,\max}$ is the max value of $\tau_j$ on $X_j$, is time and distance preserving.  Since the
$X_j$ are compact we have finite
\be
\tau_{max}=\max\{\tau_{1,\max},\tau_{2,\max}\}
\ee
and compact
\be
K=\kappa_{X_1}(X_1)
\cup \kappa_{X_2}(X_2) \subset \ell^\infty.
\ee
Thus, the timed-Fr\'echet maps
\be
\kappa_{X_j,\tau_j}:X_j\to [0,\tau_{max}]\times K \subset \ell^\infty
\ee
in the definition of the timed-Hausdorff distance, as in Definition~\ref{defn:timed-H},
is a subset of the
collection of maps, $\varphi_j$, as in Definition~\ref{defn:t-H-new}. So the infimum in the definition of $d_{\tau-H}$ is larger than the infimum
in $d_{\tau-GH}$,
which gives (\ref{eq:timed-H-equiv-1}).
\end{proof}

\begin{proof}[Proof of Theorem~\ref{thm:timed-H-equiv}]
By Lemma~\ref{lem:timed-H-equiv-1}
we already have (\ref{eq:timed-H-equiv-1})
so we need only to prove
(\ref{eq:timed-H-equiv-2}).
Let
\be\label{eq:D}
D=\inf d_H^{[0,\tau_{max}]\times Z}(\varphi_1(X_1),\varphi_2(X_2)),
\ee
where the infimum is taken over all metric spaces $Z$ and maps $\varphi_j$ as in 
Definition~\ref{defn:t-H-new}.

Fix $\epsilon>0$. Then there exist a compact metric space $Z$ and a pair of time and distance preserving maps,
\be
\varphi^\epsilon_j:(X_j,d_j,\tau_j)\to
([0,\tau_{\max}]\times Z, d_{prod}, \tau_{prod})
\ee
 close to achieving the infimum in
Definition~\ref{defn:t-H-new}:
\be\label{eq-HausEst}
d_H^{[0,\tau_{max}]\times Z}(\varphi^\epsilon_1(X_1),\varphi^\epsilon_2(X_2))
<D
+\epsilon.
\ee

By Proposition~\ref{prop:Hausdorff},
\be
\tilde {\mathcal C}^\epsilon =
\Bigl\{
(\varphi^\epsilon(x_1),\varphi^\epsilon(x_2)) 
 \,|\, d_{Z}(\varphi^\epsilon_1(x_1),
\varphi^\epsilon_2(x_2))<D+\epsilon  \Bigr\}
\ee
is a correspondence between $\varphi_1^\epsilon(X_1)$ and $\varphi_2^\epsilon(X_2)$, with $\dist( \tilde {\mathcal C}^\epsilon) < 2(D+\epsilon)$.
Since $\varphi^\epsilon_j$ are distance preserving, 
\be
{\mathcal C}^\epsilon =
\Bigl\{
(x_1,x_2) 
 \,|\, d_{Z}(\varphi^\epsilon_1(x_1),
\varphi^\epsilon_2(x_2))<D+\epsilon  \Bigr\} \subset X_1 \times X_2 
\ee
is a correspondence between $X_1$ and $X_2$, with $\dist(\mathcal C^\epsilon) < 2(D+\epsilon)$. Hence, by
Lemma~\ref{lem:corr-K} 
there exist
Fr\'echet maps $\kappa_{X_j}: X_j \to \ell^\infty$ such that 
\be \label{eq:L}
d_{\ell^\infty}(\kappa_{X_1}(x_1),\kappa_{X_2}(x_2)) <2(D+\epsilon) 
\ee
for all pairs $(x_1,x_2)\in \mathcal{C}^\epsilon$.
Then by \eqref{eq:L} the timed-Fr\'echet maps
\be
\kappa^\epsilon_{\tau_j,X_j}=
(\tau_j,\kappa_{X_j}): X_j \to [0, \tau_{\max} ] \times \ell^\infty
\ee
satisfy 
\begin{align*}
d_{[0, \tau_{\max}] \times \ell^\infty}(\kappa^\epsilon_{\tau_1, X_1}(x_1),\kappa^\epsilon_{\tau_2, X_2}(x_2)) \leq & \max\{ 
|\tau_1(x_1)- \tau_2(x_2)|,
d_{\ell^\infty}(\kappa_{X_1}(x_1),\kappa_{X_2}(x_2))\} \\
< & \max\{D+\epsilon, 2(D+\epsilon) \}
\end{align*}
for all pairs $(x_1,x_2)\in \mathcal{C}^\epsilon$.
Thus
\be\label{eq:2D}
d_H^{[0, \tau_{\max}] \times \ell^\infty}(\kappa^\epsilon_{\tau_1,X_1}(X_1),
\kappa^\epsilon_{\tau_2,X_2}(X_2))<2(D+\epsilon).
\ee
Taking a sequence of $\epsilon \to 0$ 
we obtain (\ref{eq:timed-H-equiv-2}).
\end{proof}

\subsection{
Triangle inequality}

We now prove that the timed-Gromov--Hausdorff distance $d_{\tau-GH}$, as in Definition~\ref{defn:t-H-new}, satisfies the triangle inequality.  

\begin{thm}\label{thm:triangle}
Given three compact timed-metric spaces $(X_i,d_i,\tau_i)$, $i=1,2,3$, we have the triangle
inequality
\be
\begin{split}
d_{\tau-GH}((X_1,d_1&,\tau_1), (X_3,d_3,\tau_3))
\le\\
&d_{\tau-GH}((X_1,d_1,\tau_1),(X_2,d_2,\tau_2))
+
d_{\tau-GH}((X_2,d_2,\tau_2),(X_3,d_3,\tau_3)).
\end{split}
\ee
\end{thm}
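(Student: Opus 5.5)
The plan is the classical gluing argument for the Gromov--Hausdorff triangle inequality, followed by a re-embedding that restores the max-product form. Fix $\epsilon>0$ and write $\tau_{ij}=\max\{\tau_{i,\max},\tau_{j,\max}\}$. By Definition~\ref{defn:t-H-new} I choose compact metric spaces $Z$ and $W$ together with distance and time preserving maps
\[
\varphi_1\colon X_1\to[0,\tau_{12}]\times Z,\quad \varphi_2\colon X_2\to[0,\tau_{12}]\times Z,\quad \psi_2\colon X_2\to[0,\tau_{23}]\times W,\quad \psi_3\colon X_3\to[0,\tau_{23}]\times W.
\]
These are taken with $d_H(\varphi_1(X_1),\varphi_2(X_2))<d_{\tau-GH}(X_1,X_2)+\epsilon$, and similarly for the pair $\psi_2,\psi_3$. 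Set $P=[0,\tau_{12}]\times Z$ and $Q=[0,\tau_{23}]\times W$, both compact.

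\textbf{Step 1 (gluing).} I glue $P$ and $Q$ along $X_2$. On $P\sqcup Q$ I keep $d_P$ and $d_Q$ on each piece. For $a\in P$ and $b\in Q$ I set
\[
d(a,b)=\inf_{x\in X_2}\bigl(d_P(a,\varphi_2(x))+d_Q(\psi_2(x),b)\bigr).
\]
Because $\varphi_2$ and $\psi_2$ are both distance preserving from $X_2$, the standard check gives a pseudometric. I then pass to the metric quotient $Y$. It is compact, being the union of two compact images.

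The natural time function $\tau_Y$ is $\tau_{prod}$ on each piece. It is well defined on the quotient because zero-distance identifications only occur between $\varphi_2(x)$ and $\psi_2(x)$, and these have the same time $\tau_2(x)$ since both maps are time preserving. It is $1$-Lipschitz: for $a\in P$, $b\in Q$ and any $x\in X_2$,
\[
|\tau_Y(a)-\tau_Y(b)|\le d_P(a,\varphi_2(x))+d_Q(\psi_2(x),b),
\]
using $\tau_{prod}$ $1$-Lipschitz on each piece and the equal time at the glued points. In $Y$ the composite maps $X_1\to Y$ and $X_3\to Y$ are distance and time preserving. The usual argument through the common image of $X_2$ gives
\[
d_H^Y(X_1,X_3)<d_{\tau-GH}(X_1,X_2)+d_{\tau-GH}(X_2,X_3)+2\epsilon .
\]

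\textbf{Step 2 (returning to a max-product space).} This is the main obstacle: $Y$ is a compact timed-metric space but not of the form $[0,T]\times Z'$. I would fix it with a timed-Fr\'echet map $\kappa_{\tau_Y,Y}=(\tau_Y,\kappa_Y)\colon Y\to[0,T]\times\ell^\infty$. By Proposition~\ref{prop:tau-K-dist-pres} it is distance and time preserving; directly, $\max\{|\Delta\tau_Y|,d_Y\}=d_Y$ since $\tau_Y$ is $1$-Lipschitz. Its image lies in $[0,T]\times Z'$ with $Z'=\kappa_Y(Y)$ compact.

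Since distance preserving maps preserve Hausdorff distance, the images of $X_1$ and $X_3$ remain within the same bound.

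\textbf{Step 3 (matching the time range).} Definition~\ref{defn:t-H-new} requires the target $[0,\tau_{13}]\times Z'$, where $\tau_{13}=\max\{\tau_{1,\max},\tau_{3,\max}\}$ may be smaller than $T$. The images of $X_1$ and $X_3$ have times at most $\tau_{13}$, so they lie in the subspace $[0,\tau_{13}]\times Z'$, which carries the restricted max-product metric. Hausdorff distance between subsets is unchanged by passing to this subspace. Letting $\epsilon\to0$ yields the triangle inequality.
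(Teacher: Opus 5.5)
Your proof is correct, and it takes a different route from the paper.

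\textbf{Your route.} You glue the full max-product spaces $P=[0,\tau_{12}]\times Z$ and $Q=[0,\tau_{23}]\times W$ along $\varphi_2(X_2)$ and $\psi_2(X_2)$. These are genuine isometric copies of $X_2$, so the standard Gromov--Hausdorff gluing works verbatim. The glued space $Y$ is no longer a product, and you handle this with the re-embedding $y\mapsto(\tau_Y(y),\kappa_Y(y))$ followed by restriction to $[0,\tau_{13}]\times Z'$. Even $y\mapsto(\tau_Y(y),y)\in[0,T]\times Y$ would suffice, since again $\tau_Y$ is $1$-Lipschitz.

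\textbf{The paper's route.} The paper keeps the product form from the start. It glues only the spatial factors (your $Z$ and $W$, its $Z_{12},Z_{23}$) and uses $\psi_1=(\tau_1,\iota_{12}\circ\xi_1)$ and $\psi_3=(\tau_3,\iota_{23}\circ\chi_3)$, so no re-embedding is needed. But $\xi_2$ and $\chi_2$ are only $1$-Lipschitz (Lemma~\ref{lem:zeta-dist-pres}), so a purely spatial gluing along them is not compatible with the triangle inequality. The paper therefore measures cross distances through the time-zero points $(0,z)$. This brings in terms such as $d_{12}((0,\xi_1(x_1)),\varphi_2(x_2))=\max\{\tau_2(x_2),d_{Z_{12}}(\xi_1(x_1),\xi_2(x_2))\}$. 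As a result, the stated bound $d_Z(\iota_{12}\circ\xi_1(x_1),\iota_{23}\circ\chi_3(x_3))\le D_{12}+D_{23}+2\epsilon$ does not follow as written. Three one-point spaces at time $1$, with one-point spatial factors, already give cross distance $2$ while $D_{12}=D_{23}=0$.

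\textbf{Comparison.} Your approach avoids this problem. All time information stays inside $P$ and $Q$, and the identification is along honest isometric embeddings. The extra re-embedding step costs nothing, because every compact timed-metric space embeds distance- and time-preservingly into a max-product space. Your argument is therefore the more robust way to obtain the triangle inequality.
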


\begin{proof}
We will abuse notation and only write $X_i$ instead of the tripe $(X_i,d_i, \tau_i)$.
Let 
\be\label{eq-distij}
D_{12}=d_{\tau-GH}(X_1, X_2) \qquad \text{and}\qquad 
D_{23}=d_{\tau-GH}(X_2, X_3),
\ee
$\tau_{\max 12}=\max \{ \max_{x\in X_1}\tau_1(x), \max_{x \in X_2}\tau_2(x)\}$, 
and similarly define $\tau_{\max 23}$ and $\tau_{\max 13}$. 

Fix $\epsilon>0$. Then there exist compact metric spaces $Z_{12}, Z_{23}$ and pairs of time and distance preserving maps,
\be
\varphi_j:(X_j,d_j,\tau_j)\to
([0,\tau_{\max 12}]\times Z_{12}, d_{12}, \tau_{12}) \qquad j=1,2,
\ee
and 
\be
\phi_j:(X_j,d_j,\tau_j)\to
([0,\tau_{\max 23}]\times Z_{23}, d_{23}, \tau_{23}) \qquad j=2,3,
\ee  
such that
\be
d_H^{[0,\tau_{\max 12}]\times Z_{12}}(\varphi_1(X_1),\varphi_2(X_2))
<D_{12}
+\epsilon.
\ee
and
\be
d_H^{[0,\tau_{\max 23}]\times Z_{23}}(\phi_2(X_2),\phi_3(X_3))
<D_{23}+\epsilon.
\ee
By Lemma 
\ref{lem:zeta-dist-pres}, we know that $\varphi_j=(\tau_j, \xi_j)$, $j=1,2$, 
and $\phi_j=(\tau_j, \chi_j)$, $j=2,3$. 
We construct a compact metric space $Z$ by gluing $Z_{12}$ and $Z_{23}$ along 
$X_2$:
\be
Z= Z_{12} \disjointunion Z_{23}/ \sim, \qquad \text{ where }\ z \sim z' \iff  {\exists x\in X_2 \,s.t.\, z=\xi_2(x),\, z'=\chi_2(x)},
\ee
with metric given by
\begin{align}
d_Z(z,z')= 
\begin{cases}
   d_{Z_{12}}(z, z') & \quad z,z' \in Z_{12}\\
    d_{Z_{23}}(z, z') & \quad z,z' \in Z_{23}\\
   \inf_{x \in X_2}\{ d_{12}((0,z), \varphi_2(x)) + d_{23}(\phi_2(x),(0,z'))\} & \quad z \in Z_{12}, \, z'\in Z_{23}\\
d_Z (z',z) & \quad z' \in Z_{12}, \, z\in Z_{23}.
\end{cases}
\end{align}
The fact that $d_Z$ is symmetric and non-negative is immediate from its definition. To check $d_Z$ is definite, the only interesting case is when $z\in Z_{12}$ and $z'\in Z_{23}$ are such that $d_Z(z,z')=0$, which by compactness of $X_2$ and the definition of $d_Z$ implies there is some $x\in X_2$ such that
\[
d_{12}((0,z), \varphi_2(x)) + d_{23}(\phi_2(x),(0,z')) = 0,
\]
thus
\[
d_{Z_{12}}(z,\xi_2(x)) = d_{Z_{23}}(\chi_2(x),z') = 0, 
\]
i.e. $z\sim z'$. For the triangle inequality, consider the case $z,z'\in Z_{12}$ and $z''\in Z_{23}$. Then, for any $x,x'\in X_2$ we have:
\begin{align*}
d_Z(z,z') &= d_{12}((0,z),(0,z')) \\
&\leq d_{12}((0,z),\varphi_2(x)) + d_{12}(\varphi_2(x),\varphi_2(x')) + d_{12}(\varphi_2(x'),(0,z'))\\
&= d_{12}((0,z),\varphi_2(x)) + d_{23}(\phi_2(x),\phi_2(x')) + d_{12}(\varphi_2(x'),(0,z'))\\
&\leq d_{12}((0,z),\varphi_2(x)) + d_{23}(\phi_2(x),(0,z''))+d_{23}((0,z''),\phi_2(x')) + d_{12}(\varphi_2(x'),(0,z'))
\end{align*}
and by taking the infimum over $x,x'\in X_2$, we get
\[
d_Z(z,z') \leq d_Z(z,z'')+d_Z(z'',z').
\]
On the other hand
\begin{align*}
d_Z(z,z'') &\leq  d_{12}((0,z),\varphi_2(x)) + d_{23}(\phi_2(x),(0,z'')) \\
&\leq d_{12}((0,z),(0,z')) + d_{12}((0,z'),\varphi_2(x)) + d_{23}(\phi_2(x),(0,z'')) \\
&= d_Z(z,z') + d_{12}((0,z'),\varphi_2(x)) + d_{23}(\phi_2(x),(0,z'')), 
\end{align*}
and by taking the infimum over $x\in X_2$, we get
\[
d_Z(z,z'') \leq d_Z(z,z')+d_Z(z',z'').
\]
The rest of the cases are analogous or easier.

The maps
\be
\psi_1=(\tau_1, \iota_{12} \circ \xi_1): X_1 \to [0, \tau_{\max 13}]\times Z 
\ee
\be
\psi_3=(\tau_3, \iota_{23} \circ\chi_3): X_3 \to [0, \tau_{\max 13}]\times Z,  
\ee
where $\iota_{ij}: Z_{ij} \to Z$ are the canonical inclusion maps, are clearly distance and time preserving.  We claim that
\be
d_{H}^{[0, \tau_{\max 13}]\times Z} ( \psi_1(X_1), \psi_3(X_3)) < D_{12}+ D_{23}+ 2\epsilon.
\ee
Since $\epsilon>0$ was taken arbitrarily, this will establish the triangle inequality.

By proposition~\ref{prop:Hausdorff}  
there are correspondences
\be
\mathcal C_{12} \subset X_1\times X_2 \qquad \text{and}\qquad \mathcal C_{23} \subset X_2\times X_3 
\ee
such that 
for all $(x_1, x_2) \in \mathcal{C}_{12}$
we have
\be
d_{prod}(\varphi_1(x_1),
\varphi_2(x_2))<D_{12}+\epsilon.
\ee
Similarly for ${\mathcal C_{23}}$. 
Let
\be
{\mathcal C_{13}} \subset X_1\times X_3
\ee
be the composition of $\mathcal{C}_{12}$ and $\mathcal{C}_{23}$ (see Definition~\ref{defn:composition of correspondences}). 
This correspondence gives the claimed estimate.  Indeed, for any $(x_1,x_3)\in \mathcal{C}_{13}$, let $x_2 \in X_2$ such that 
$(x_1, x_2) \in {\mathcal C_{12}}$ and 
$(x_2, x_3) \in {\mathcal C_{23}}$, then 
\begin{align*}
d_{prod}(\varphi_1(x_1),\varphi_2(x_2))<D_{12}+\epsilon\\
d_{prod}(\phi_2(x_2),\phi_3(x_3))<D_{23}+\epsilon,
\end{align*}
which implies
\begin{align}
d_{Z_{12}}(\xi_1(x_1),\xi_2(x_2)) < D_{12}+\epsilon, \label{eq:trianglineq1}\\
d_{Z_{23}}(\chi_2(x_2),\chi_3(x_3)) < D_{23}+\epsilon \label{eq:trianglineq2}
\end{align}
and
\begin{align}
|\tau_1(x_1)-\tau_2(x_2)| < D_{12}+\epsilon, \label{eq:trianglineq3}\\
|\tau_2(x_2)-\tau_3(x_3)| < D_{23}+\epsilon. \label{eq:trianglineq4}
\end{align}
By definition of $d_Z$, \eqref{eq:trianglineq1}, and \eqref{eq:trianglineq2},
\[
d_Z(\iota_{12}\circ\xi_1(x_1),\iota_{23}\circ\chi_3(x_3)) \leq D_{12} + D_{23} + 2\epsilon
\]
and by \eqref{eq:trianglineq3} and \eqref{eq:trianglineq4},
\[
|\tau_1(x_1)-\tau_3(x_3)| < D_{12} + D_{23} + 2\epsilon.
\]
It follows that for all $(x_1, x_3) \in \mathcal{C}_{13}$
\begin{align*}
d_{prod}(\psi_1(x_1),\psi_3(x_3)) &= \max\{|\tau_1(x_1)-\tau_3(x_3)|,d_Z(\iota_{12}\circ \xi_1(x_1),\iota_{23}\circ \xi_3(x_3))\}\\
&< D_{12} + D_{23} + 2\epsilon.
\end{align*}
Since $\mathcal{C}_{13}$ is a correspondence, this implies \be
d_{H}^{[0, \tau_{\max 13}]\times Z} ( \psi_1(X_1), \psi_3(X_3)) < D_{12}+ D_{23}+ 2\epsilon.
\ee
\end{proof}

\subsection{{$\tau$-GH} compactness theorem}

As an application of Gromov's Compactness theorem, c.f. 
Theorem \ref{thm-GHcriterion}, we now prove a timed-Gromov--Hausdorff compactness result.

\begin{proof}[Proof of Theorem \ref{thm:timed-GH-compactness}]

Following the notation of Theorem  \ref{thm-GHcriterion} and its proof, we  define
\be
\varphi_j= (\tau_j(x), h_j(x)): X_j \to [0, \tau_{\max}] \times F
\ee
and endow $[0, \tau_{\max}] \times F$ with the max-product structure. 
Since $\tau_j$ are $1$-Lipschitz and $h_j$ are distance preserving, we conclude that
\be
d_{prod}(\varphi_j(x),\varphi_j(y))= 
\max\{ |\tau_j(x) - \tau_j(y)|, \, \|h_j(x)) - h_j(x))\| \}= d_j(x,y).
\ee
Hence, $\varphi_j$ are time and distance preserving. 

\medskip

Since $([0, \tau_{\max}] \times F, d_{prod})$ is compact, 
by Blaschke's compactness theorem (c.f. Chapter 7 in \cite{BBI}), there exist a compact space
$Y' \subset [0, \tau_{\max}] \times F$ 
and a subsequence, that we do not relabel, such that 
\begin{equation}\label{eq-tauHconv}
   d_H^{ [0, \tau_{\max}] \times F} (\varphi_j(X_j), Y')\to 0. 
\end{equation}

We now define 
$\varphi_\infty: Y_\infty \to [0,\tau_{max}]\times F$ and 
$\tau_\infty: Y_\infty \to \R$ in the following way:

For $f\in Y_\infty$, by the Hausdorff convergence of the sets 
$h_j(X_j)$, \eqref{eq-GHcriterion-H},
there exists a sequence of 
$x_j \in X_j$ such that 
\be
\|h_j(x_j)-f\| \to 0.
\ee
Now by the Hausdorff convergence of the 
$\varphi_j(X_j)$, \eqref{eq-tauHconv},
there exists a further subsequence of $x_j \in X_j$, that we do not relabel, and unique $t \in [0,\tau_{\max}]$ such that 
$(t,f)\in Y'$ and
\be
d_{prod}(\varphi_j(x_j),(t,f)) \to 0.
\ee
We define
\be 
\varphi_\infty(f) :=(t,f) \quad \text{and} \quad \tau_\infty(f)=t. 
\ee

We finally claim that 
$(Y_\infty, \| \,\cdot\, \|, \tau_\infty)$ is 
a compact timed-metric space which is (up to a subsequence) the timed-Gromov--Hausdorff limit of 
$(X_j, d_j, \tau_j)$. 
We prove this by showing that $\tau_\infty$ is $1$-Lipschitz, $\varphi_\infty$ is distance preserving (the fact that it is time preserving is obvious from its definition), and $Y'=\varphi_\infty(Y_\infty)$. 

Indeed, take $f,f' \in Y_\infty$. 
Then there exist $x_j, x_j' \in X_j$ such that 
\be
 \| h(x_j)-f\| , \ \| h(x_j')- f'\| \to 0
\ee
and 
\be
d_{prod}(\varphi_j(x_j), \varphi_{\infty}(f)), \ 
d_{prod}(\varphi_j(x_j'), \varphi_\infty(f'))\to 0.
\ee
Then, by the definition of the prod metric,
\be
    |\tau_\infty(f)-\tau_\infty(f')|\leq d_{prod}(\varphi_\infty(f), \varphi_\infty(f')),
\ee
and
\begin{align}
d_{prod}(\varphi_\infty(f), \varphi_\infty(f'))= &
\lim_{j \to \infty}d_{prod}(\varphi_j(x_j), \varphi_j(x'_j)) \\
= & \lim_{j\to \infty} d_j(x_j, x'_j) \\
= & \lim_{j\to \infty} \| h(x_j)-h(x_j')\|  \\
= & d_F(f,f').
\end{align}
Thus, $\varphi_\infty$ is distance preserving and $\tau_\infty$ is $1$-Lipschitz. 

Finally, if $(t,f)\in Y'$ then, by \eqref{eq-tauHconv}, there is a sequence $x_j\in X_j$ such that
\[
\varphi_j(x_j) = (\tau_j(x_j),h_j(x_j))\to (t,f),
\]
which implies $\|h_j(x_j)-f\|\to 0$ and $\tau_j(x_j)\to t$. Therefore, $f\in Y_\infty$ and $\varphi_\infty(f) = (t,f)$, which proves $Y'=\varphi_\infty(Y_\infty)$.
\end{proof}

\begin{rmrk}\label{rmrk-converseEquibddEquicpct}
If $(X_j,d_j,\tau_j)$ are a sequence of compact timed-metric spaces that converges 
in the intrinsic timed-Hausdorff sense or the timed-Gromov--Hausdorff sense to a compact timed-metric space,
$(X_\infty,d_\infty,\tau_\infty)$, 
then from the standard theory of Gromov--Hausdorff convergence, 
(see, for example, Chapter 7 in \cite{BBI}, in particular, Exercises 7.3.13, 7.3.14, and Proposition 7.4.12), we know that $(X_j,d_j)$ are equibounded 
and
equicompact. Moreover, using the notion of intrinsic timed-Hausdorff or the timed-Gromov--Hausdorff distance, there exists $\tau_{\max}>0$ such that 
$\bigcup_{j\in \mathbb N}\tau_j(X_j) \subset [0, \tau_{\max}]$. 
\end{rmrk}

\section{Understanding causality}\label{s:understanding causality}

In this section, we show that the causal structure for a timed-metric space introduced in Definition \ref{defn:causal-structure} defines a partial order, and study when this structure is preserved in the limit. 
\medskip

We start with a result that formalizes the fact that the causal structure for timed-metric spaces is a generalization of the standard causal structure of space-times.

\begin{prop}
Given a timed-metric space $(X,d,\tau)$, the induced causal structure defines a partial order.
\end{prop}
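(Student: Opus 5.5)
We define the relation $q \preceq p$ iff $p\in J^+(q)$, that is, iff $\tau(p)-\tau(q)=d(p,q)$. We then verify the three axioms of a partial order directly from Definition~\ref{defn:causal-structure}, using only that $d$ is a metric and that $\tau$ is $1$-Lipschitz.

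\emph{Reflexivity.} For every $p\in X$ we have $\tau(p)-\tau(p)=0=d(p,p)$, so $p\in J^+(p)$.

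\emph{Antisymmetry.} Suppose $p\in J^+(q)$ and $q\in J^+(p)$. Then
\[
\tau(p)-\tau(q)=d(p,q) \quad\text{and}\quad \tau(q)-\tau(p)=d(q,p)=d(p,q).
\]
Adding the two identities gives $2d(p,q)=0$. Hence $d(p,q)=0$, and definiteness of the metric yields $p=q$. This step uses that $d$ is a genuine metric rather than a pseudometric.

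\emph{Transitivity.} This is the only step needing more than a line. Suppose $p\in J^+(q)$ and $q\in J^+(r)$. Adding the two identities gives
\[
\tau(p)-\tau(r)=d(p,q)+d(q,r)\ge d(p,r),
\]
where the inequality is the triangle inequality. On the other hand, since $\tau$ is $1$-Lipschitz,
\[
\tau(p)-\tau(r)\le |\tau(p)-\tau(r)|\le d(p,r).
\]
The two bounds force $\tau(p)-\tau(r)=d(p,r)$, i.e.\ $p\in J^+(r)$. As a by-product, equality also holds in the triangle inequality, $d(p,r)=d(p,q)+d(q,r)$.

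No real obstacle is expected. The only point to keep in mind is that the $1$-Lipschitz condition on $\tau$ is exactly what makes transitivity work: it supplies the reverse inequality that upgrades the triangle inequality to an equality.
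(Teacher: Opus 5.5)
Your proof is correct and follows the paper's argument essentially step for step. Reflexivity comes from $d(p,p)=0$, and antisymmetry from combining the two identities to force $d(p,q)=0$. Transitivity uses the triangle inequality for one bound and the $1$-Lipschitz property of $\tau$ for the reverse bound.
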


\begin{proof}
For any $p\in X$ it is clear that $p\in J^+(p)$, since 
\[
\tau(p) - \tau(p) = 0 = d(p,p).
\]
On the other hand, if $p\in J^+(q)$ and $q\in J^+(p)$ then 
\[
\tau(p)-\tau(q) = d(q,p) = d(p,q) = \tau(q)-\tau(p),
\]
which implies $d(p,q) = 0$, i.e.\ $p=q$. Finally, if $p \in J^+(q)$ and $q\in J^+(r)$,  then
\[
\tau(p)-\tau(q) = d(q,p) \quad \text{and}\quad 
\tau(q)-\tau(r) = d(r,q)
\]
which by the triangle inequality yields
\[
\tau(p)-\tau(r) = d(q,p)+d(r,q) \geq d(r,p)
\]
Hence, $\tau(p)-\tau(r) \geq d(r,p)$. The reversed inequality 
follows from the fact that $\tau$ is $1$-Lipschitz. Hence, $p \in J^+(r)$. This concludes the proof of the proposition.
\end{proof}

\begin{example}
Consider the timed-metric space $(X,d,\tau)$ where $\tau = d(\cdot,p)$ for some $p \in X$. In this example, the causal relation $y\in J^+(x)$ is equivalent to $p$, $x$, and $y$ being aligned, i.e. 
\be
d(y,p)=d(y,x)+d(x,p).
\ee
In particular, if $X$ is non-branching and geodesic, the set $J^+(x)$, for $x\neq p$, is the image of the maximal geodesic starting at $x$ that can be extended to $p$, whereas $J^+(p) = X$.    
\end{example}

\begin{rmrk}
Observe that we could define chronological futures and pasts as
\[
I^\pm(p) : = \interior(J^\pm(p)). 
\]
With this definition, the relation $q\in I^-(p)$ is transitive, and therefore, it could be used to define a chronological structure in $X$, which
agrees with the canonical chronological structure for Lipschitz space-times,
but for lower regularity, such as $C^0$, 
the chronological structures may not coincide due to the existence of bubbling (see \cite{Ling-Aspects}).    
\end{rmrk}

Since we have a causal structure on timed-metric spaces, we can define causal curves.
\begin{defn}
Given a timed-metric space $(X,d,\tau)$, 
we say that a non-constant continuous curve, $\gamma\colon I\to X$, is a \textbf{future-directed causal curve} if for any $s\leq t$ in $I$, $\gamma(s) \in J^-(\gamma(t))$. Analogously, we define
past-directed causal curves. Causal curves are curves that are either future-directed or past-directed. 
\end{defn}

The following two propositions are respectively analogous to \cite[Corollary~3.19]{SV-Null} and to part of \cite[Lemma~3.20]{SV-Null}.

\begin{prop}\label{prop:causal curves are realizers}
Let $(X,d,\tau)$ be a timed-metric space. Then any causal curve of $X$ is a distance realizer between any of its points. 
\end{prop}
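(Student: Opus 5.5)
The plan is to reduce the statement to the elementary observation that along a causal curve the distance $d$ is additive, so that the length of the curve equals the distance between its endpoints. Let $\gamma\colon I\to X$ be a causal curve. Without loss of generality it is future-directed; otherwise reverse the parametrization, which preserves both length and the set of points. Fix $a\le b$ in $I$. The claim is that the restriction $\gamma|_{[a,b]}$ has length $L_d(\gamma|_{[a,b]})=d(\gamma(a),\gamma(b))$, where
\[
L_d(\gamma|_{[a,b]})=\sup\sum_{i} d(\gamma(t_{i-1}),\gamma(t_i)),
\]
the supremum being over partitions $a=t_0<\dots<t_n=b$. This is exactly the sense in which $\gamma$ is a distance realizer between $\gamma(a)$ and $\gamma(b)$.

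First, for any such partition the definition of future-directed causal curve gives $\gamma(t_{i-1})\in J^-(\gamma(t_i))$. By Definition~\ref{defn:causal-structure} this means
\[
d(\gamma(t_{i-1}),\gamma(t_i))=\tau(\gamma(t_i))-\tau(\gamma(t_{i-1})).
\]
Summing over $i$, the sum telescopes to $\tau(\gamma(b))-\tau(\gamma(a))$. Since $\gamma(a)\in J^-(\gamma(b))$, this equals $d(\gamma(a),\gamma(b))$.

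Hence every partition sum equals $d(\gamma(a),\gamma(b))$, and so does the supremum $L_d(\gamma|_{[a,b]})$. Combined with the general inequality $L_d\ge d$ between endpoints, this shows $\gamma$ is length-minimizing: it realizes the distance between any two of its points. As a byproduct, $\tau\circ\gamma$ is a parametrization by arclength up to an additive shift.

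There is no real obstacle here. The only point needing care is the meaning of ``distance realizer'' in a general metric space, where one should use the length structure induced by $d$ rather than a Riemannian length; with that convention the telescoping argument is the whole proof. I would also remark that continuity of $\gamma$ plays no role in the computation itself.
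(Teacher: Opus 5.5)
Your proposal is correct and follows the paper's proof essentially verbatim. Both reduce to the future-directed case, note that consecutive partition points are causally related so every partition sum telescopes to $\tau(\gamma(b))-\tau(\gamma(a))=d(\gamma(a),\gamma(b))$, and then take the supremum over partitions.
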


We recall that for a metric space $(X,d)$ and points $x,y \in X$, a continuous curve $\gamma : [s,t] \to X$ from 
$x$ to $y$ is a distance realizer if 
\[
d(x,y)= \sup_{s=s_0<\dots < s_N =t} \sum_{i=1}^{N} d(\gamma(s_{i-1}),\gamma(s_i)).
\]

\begin{proof}
Let $\gamma\colon I\to X$ be a causal curve, and without loss of generality, assume that $\gamma$ is future-directed. Let $s<t$ in $I$. Then for any $s= s_0 < \dots < s_N = t$, 
\[
\gamma(s_{i-1}) \in J^-(\gamma(s_i))
\]
for $i=1,\dots, N$. Therefore,  
\begin{align*}
\sum_{i=1}^{N} d(\gamma(s_{i-1}),\gamma(s_i)) &= \sum_{i=1}^{N} \tau(\gamma(s_i))-\tau(\gamma(s_{i-1})) \\
&= \tau(\gamma(t))-\tau(\gamma(s)) = d(\gamma(s),\gamma(t)).
\end{align*}
Taking the supremum over all possible partitions of 
$[s,t]$, we get that 
$\gamma$ is a distance realizer between $\gamma(s)$ and $\gamma(t)$.
\end{proof}

\begin{prop}\label{prop:piecewise causal realizers}
Let $(X,d,\tau)$ be a timed-metric space. Let $\gamma\colon [a,b]\to X$ be a piecewise causal curve, i.e., there exists a partition $a=s_0<\dots <s_N=b$ such that $\gamma|_{[s_{i-1},s_i]}$ is causal for $i=1,\dots, N$. If $\gamma$ is a distance realizer between any of its points then either $\gamma$ is causal or for some subpartition $a = s_0' <\dots s'_{N'} =b$ of $a=s_0<\dots <s_N=b$ the following holds: $\gamma|_{[s_{i-1}',s_i']}$ and $\gamma|_{[s_i',s_{i+1}']}$ have opposite causal orientation and no point in $\gamma|_{[s_{i-1}',s_i')}$ is causally related to points in $\gamma|_{(s_{i}',s_{i+1}']}$, for $i=1,\dots, N-1$.
\end{prop}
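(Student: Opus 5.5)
Let $\gamma\colon[a,b]\to X$ be piecewise causal and a distance realizer between any two of its points. The plan is to merge consecutive pieces of the same orientation, and then show that the junctions between pieces of opposite orientation behave as claimed.

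\textbf{Merging pieces of the same orientation.} Suppose $\gamma|_{[s_{i-1},s_i]}$ and $\gamma|_{[s_i,s_{i+1}]}$ are both future-directed. For $s\le s_i\le t$ we have $\gamma(s)\in J^-(\gamma(s_i))$ and $\gamma(s_i)\in J^-(\gamma(t))$. Transitivity of $J^-$, which holds because the causal relation is a partial order, gives $\gamma(s)\in J^-(\gamma(t))$. Hence $\gamma|_{[s_{i-1},s_{i+1}]}$ is future-directed causal. The past-directed case is symmetric.

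Iterating this merge, we may delete partition points until consecutive pieces have opposite orientation. This produces the subpartition $a=s_0'<\dots<s'_{N'}=b$. If $N'=1$, then $\gamma$ is causal and we are done. The orientation statement then holds by construction, so what remains is the non-relatedness claim. Note that no distance-realizer hypothesis has been used so far.

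\textbf{Non-relatedness across a junction.} Fix a junction $s_i'$. Without loss of generality, let $\gamma|_{[s'_{i-1},s'_i]}$ be future-directed and $\gamma|_{[s'_i,s'_{i+1}]}$ past-directed. Take $u\in[s'_{i-1},s'_i)$ and $v\in(s'_i,s'_{i+1}]$, and set $p=\gamma(u)$, $m=\gamma(s'_i)$, $q=\gamma(v)$. Then
\[
d(p,m)=\tau(m)-\tau(p), \qquad d(m,q)=\tau(m)-\tau(q).
\]
Since $\gamma$ realizes distance between $p$ and $q$, the length of $\gamma|_{[u,v]}$ equals $d(p,q)$. By Proposition~\ref{prop:causal curves are realizers} and additivity of length, this length is $d(p,m)+d(m,q)$, so
\[
d(p,q)=2\tau(m)-\tau(p)-\tau(q).
\]

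Now suppose, for contradiction, that $q\in J^+(p)$, so $d(p,q)=\tau(q)-\tau(p)$. Comparing the two expressions gives $\tau(m)=\tau(q)$. Then $d(m,q)=0$, so $m=q$. Because $v>s'_i$, this contradicts the injectivity of $\gamma$.

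Injectivity is the main obstacle. A realizer may in principle be constant on subintervals, whereas causal curves are only assumed non-constant. I would handle it as follows. On a causal piece, $\gamma(v)=\gamma(s'_i)$ forces $\tau\circ\gamma$ to be constant on $[s'_i,v]$, because $\tau\circ\gamma$ is monotone on that piece. The causal relation then gives $d=0$ between all points of $\gamma([s'_i,v])$, so $\gamma$ is constant on $[s'_i,v]$. Such degenerate stalls should either be excluded by assuming the realizer is parametrized without constant intervals, or absorbed by moving the junction point.

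The case $p\in J^+(q)$ is handled in the same way. It yields $\tau(m)=\tau(p)$, hence $d(p,m)=0$ and $p=m$, which again contradicts $u<s'_i$. When the first piece is past-directed and the second future-directed, the same computation applies with $\tau$ replaced by $-\tau$ in the bookkeeping, giving $d(p,q)=\tau(p)+\tau(q)-2\tau(m)$ and the same contradictions.
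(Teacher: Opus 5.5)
Your argument is the paper's argument. Both derive $d(\gamma(u),\gamma(v))=2\tau(\gamma(s_i'))-\tau(\gamma(u))-\tau(\gamma(v))$ from the realizer property and Proposition~\ref{prop:causal curves are realizers}, and both reach a contradiction from either causal relation between $\gamma(u)$ and $\gamma(v)$. Your explicit merging of equally oriented pieces by transitivity only spells out the paper's last sentence.

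The point you flag is exactly where the paper is too quick. It asserts $\tau(\gamma(s_i))-\tau(\gamma(v))=d(\gamma(v),\gamma(s_i))>0$ on the grounds that $\gamma|_{[s_i,s_{i+1}]}$ is non-constant. But non-constancy of a piece does not give $\gamma(v)\neq\gamma(s_i)$ for all $v\in(s_i,s_{i+1}]$. In fact, with a stall at a junction the statement is false. Take $X=[-1,1]$ with $d(x,y)=|x-y|$ and $\tau(x)=1-|x|$. Let $\gamma\colon[0,3]\to X$ be $\gamma(t)=t-1$ on $[0,1]$, $\gamma\equiv 0$ on $[1,2]$, and $\gamma(t)=t-2$ on $[2,3]$, with partition $0<1<3$. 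Then:
\begin{itemize}
\item $\gamma|_{[0,1]}$ is future-directed, and $\gamma|_{[1,3]}$ is past-directed and non-constant;
\item $\gamma$ is monotone, hence a realizer between any two of its points;
\item $\gamma$ is not causal, since its endpoints $\pm1$ have equal time but distance $2$.
\end{itemize}
Yet $\gamma(0)\in J^-(\gamma(3/2))$, with $0\in[0,1)$ and $3/2\in(1,3]$.

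So your first remedy is the right one, and you should commit to it. Assume $\gamma$ is constant on no subinterval; this is harmless up to reparametrization by arc length, since $\gamma$ has finite length $d(\gamma(a),\gamma(b))$. Your monotonicity remark shows this is precisely what forces $m\neq q$ and $m\neq p$, and with it your proof is complete.

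Your second remedy, moving the junction point, cannot work. Suppose $\gamma\equiv m$ on $[t_*,t^*]$ with $t_*<t^*$, and the orientation changes there, say from future to past. Place the junction at any $c\in[t_*,t^*]$. Then one of the two half-open pieces contains a parameter mapped to $m$, while every point of both adjacent pieces lies in $J^-(m)$. Hence a causally related pair across the junction always survives. Placing $c$ outside $[t_*,t^*]$ instead makes one of the two pieces non-causal, by antisymmetry of the causal relation.
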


\begin{proof}
Assume that $\gamma$ is not causal. Thus, without loss of generality, we can assume that for some $s_i$, $\gamma|_{[s_{i-1},s_i]}$ is future-directed causal and $\gamma|_{[s_i,s_{i+1}]}$ is past-directed causal. Let $u\in [s_{i-1},s_i)$ and $v\in (s_i,s_{i+1}]$. If $\gamma(u) \in J^-(\gamma(v))$ then 
\begin{align*}
d(\gamma(u),\gamma(v)) &= \tau(\gamma(v))-\tau(\gamma(u)) \\
&< \tau(\gamma(s_i))-\tau(\gamma(u))+\tau(\gamma(s_i))-\tau(\gamma(v)) \\
&= d(\gamma(u),\gamma(v)),
\end{align*}
where the strict inequality follows from the fact that $\gamma|_{[s_i,s_{i+1}]}$ is non-constant and past-directed, therefore $\tau(\gamma(s_i))-\tau(\gamma(v))=d(\gamma(v),\gamma(s_i))>0$. This is a contradiction, and we get an analogous contradiction assuming $\gamma(u)\in J^+(\gamma(v))$. Therefore, $\gamma(u)$ and $\gamma(v)$ are not causally related. By letting $a=s'_0<\dots<s'_{N'}=b$ be the ordered values of $s_i$ such that $\gamma$ changes causal orientation, the result follows.
\end{proof}

\subsection{Causality and convergence}

We now apply Theorem \ref{thm:timed-GH-compactness} and Theorem~\ref{thm:timed-Gromov-compactness-CPS}
to establish some results describing how causality behaves under intrinsic timed-Hausdorff and timed-Gromov--Hausdorff convergence.

\begin{proof}[Proof of Corollary~\ref{cor:CausalSeq}]
By Remark \ref{rmrk-converseEquibddEquicpct} and Theorem \ref{thm:timed-GH-compactness}, there exist
  $\varphi_j: X_j \to [0,\tau_{max}]\times Z$, $j \in \mathbb N$, and $\varphi_\infty: X_\infty \to [0,\tau_{max}]\times Z$ isometric embeddings that realize the $\tau$-GH convergence.

We now prove (\ref{item:CausalSeq1}).
Then, the subsequences $\varphi_j(p_j)$ and $\varphi_j(q_j)$ converge in $Z_{prod}=[0,\tau_{max}]\times Z$ to some $z_\infty, z'_\infty \in \varphi_\infty(X_\infty)$, respectively. 
Thus, $z_\infty=\varphi_\infty(p_\infty)$ and 
 $z'_\infty=\varphi_\infty(q_\infty)$ for some 
 $p_\infty, q_\infty \in X_\infty$. It remains to show that the points satisfy the stated causal relations.

Recall that $p_j\in J^+_{X_j}(q_j)$ iff 
\be
d_j(p_j, q_j)= \tau_j(p_j) - \tau_j(q_j).
\ee
Since the $\varphi_j$ are time and distance preserving, this can be rewritten as 
\be
d_{prod}(\varphi_j(p_j),\varphi_j(q_j))= \tau_{prod}(\varphi_j(p_j)) - \tau_{prod}(\varphi_j(q_j)),
\ee
which by the $\tau$-GH convergence,
converges to 
\be
d_{prod}(\varphi_\infty(p_\infty),\varphi_\infty(q_\infty))= \tau_{prod}(\varphi_\infty(p_\infty)) - \tau_{prod}(\varphi_\infty(q_\infty)).
\ee
This proves (\ref{item:CausalSeq1}).

Now we show (\ref{item:CausalSeq2}). Let $p_\infty,p_\infty'\in X_\infty$ such that $p_\infty \not\in J^+_{X_\infty}(p_\infty')$. By Corollary~\ref{cor:pointtimeConvergence}, there exist $p_j,p_j'\in X_j$ such that $\varphi_j(p_j)\to p_\infty$ and $\varphi_j(p_j')\to \varphi_\infty(p_\infty')$. If the statement is false, then there are common subsequences $p_{j_k}$, $p_{j_k}'$ such that $p_{j_k}\in J^+_{X_{j_k}}(p_{j_k}')$. By Item (\ref{item:CausalSeq1}), it follows that $p_\infty \in J^+_{X_\infty}(p_\infty')$, which is a contradiction.
\end{proof}

For the sake of completeness, we rewrite and prove both second items of Corollary \ref{cor:pointtimeConvergence} and Corollary \ref{cor:CausalSeq}, for the intrinsic timed-Hausdorff distance using addresses in the sense of Definition~\ref{defn:addresses}. Note that it is precisely in these cases where the converging sequence is automatically given by the properties of the addresses. 

\begin{cor}\label{cor:CausalSeqWithAddressesItem2}
Suppose a sequence of compact 
timed-metric spaces converges 
in the inrinsic timed-Hausdorff sense to a compact timed-metric space,
\be
(X_j,d_j,\tau_j)\tHto
(X_\infty,d_\infty,\tau_\infty).
\ee
Then for a subsequence $j_k$
the conclusions of Theorem~\ref{thm:timed-Gromov-compactness-CPS}
hold and the following is satisfied:

\medskip

Given $p_\infty=\mathcal{I}^{\infty}(\alpha) \in X_\infty$, the sequence
$p_j=\mathcal{I}^{j}(\alpha)\in X_j$ converges to $p_\infty$:
\[
\varphi_j( \mathcal{I}^{j}(\alpha)  )\to \varphi_\infty(\mathcal{I}^{\infty}(\alpha)).
\]
Moreover, given $\alpha,\alpha'\in \mathcal{A}$ such that 
\be
\mathcal{I}^{\infty}(\alpha) \notin J^+_{X_{\infty}}(\mathcal{I}^{\infty}(\alpha')),
\ee
for $j$ sufficiently large it holds, 
\be
\mathcal{I}^j(\alpha) \notin J^+_{X_j}(\mathcal{I}^j(\alpha')).
\ee
\end{cor}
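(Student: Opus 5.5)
The plan is to use the convergence produced by Theorem~\ref{thm:timed-Gromov-compactness-CPS}, together with the uniform address estimate \eqref{eq:alpha-A}. By Remark~\ref{rmrk-converseEquibddEquicpct}, the converging sequence $(X_j,d_j,\tau_j)$ is equibounded and equicompact, and its times are uniformly bounded by some $\tau_{\max}$. So, after passing to a subsequence $j_k$, Theorem~\ref{thm:timed-Gromov-compactness-CPS} provides timed-Fr\'echet maps $\varphi_j=\kappa_{\tau_j,\mathcal N_j}$ into $[0,\tau_{\max}]\times Z\subset\ell^\infty$, and \cite[Proposition~3.4]{Che-Perales-Sormani-2025} provides surjections $\mathcal I^j\colon\mathcal A\to X_j$ satisfying \eqref{eq:alpha-A}. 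The limit obtained this way is identified with the given $(X_\infty,d_\infty,\tau_\infty)$ by definiteness, Theorem~\ref{thm:definite-timed-H}. Since $\mathcal I^\infty$ is surjective, every $p_\infty$ can be written as $\mathcal I^\infty(\alpha)$. The first claim is then immediate: for a fixed $\alpha$,
\[
d_{\ell^\infty}\big(\varphi_j(\mathcal I^j(\alpha)),\varphi_\infty(\mathcal I^\infty(\alpha))\big)\le \sup_{\beta\in\mathcal A} d_{\ell^\infty}\big(\varphi_j(\mathcal I^j(\beta)),\varphi_\infty(\mathcal I^\infty(\beta))\big)\to 0 .
\]

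For the second claim I will argue directly with the uniform convergence statements \eqref{eq:sup-A} and \eqref{eq:sup-A-d}, without extracting further subsequences. Set $p=\mathcal I^\infty(\alpha)$ and $p'=\mathcal I^\infty(\alpha')$. By $1$-Lipschitzness of $\tau_\infty$, the hypothesis $p\notin J^+_{X_\infty}(p')$ means
\[
\delta:=d_\infty(p,p')-\big(\tau_\infty(p)-\tau_\infty(p')\big)>0 .
\]
By \eqref{eq:sup-A} and \eqref{eq:sup-A-d}, for $j$ large we have $|\tau_j(\mathcal I^j(\beta))-\tau_\infty(\mathcal I^\infty(\beta))|<\delta/3$ for $\beta=\alpha,\alpha'$, and also $|d_j(\mathcal I^j(\alpha),\mathcal I^j(\alpha'))-d_\infty(p,p')|<\delta/3$. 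The triangle inequality then gives
\[
d_j(\mathcal I^j(\alpha),\mathcal I^j(\alpha'))-\big(\tau_j(\mathcal I^j(\alpha))-\tau_j(\mathcal I^j(\alpha'))\big)>\delta-\delta=0,
\]
so $\mathcal I^j(\alpha)\notin J^+_{X_j}(\mathcal I^j(\alpha'))$ for all large $j$.

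The only delicate point I expect is bookkeeping. One has to make sure that the subsequence and the address maps for the given limit match those of Theorem~\ref{thm:timed-Gromov-compactness-CPS}, i.e.\ that the abstract limit produced there may be taken to be $X_\infty$ itself. This will be handled by the definiteness of $d_{\tau-H}$ and by composing with the distance and time preserving bijection it provides. The rest is the elementary estimate above.
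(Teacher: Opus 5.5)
Your proof is correct and follows essentially the paper's argument: the first claim comes directly from the uniform address estimate \eqref{eq:alpha-A}, and the second from uniform convergence of times and distances along addresses. The differences are cosmetic. You argue directly with the positive gap $\delta$ and invoke \eqref{eq:sup-A} and \eqref{eq:sup-A-d}, whereas the paper argues by contradiction and re-derives the distance estimate from the Fr\'echet coordinates; you also make explicit, via the triangle inequality and definiteness of $d_{\tau-H}$, the identification of the limit with $X_\infty$ that the paper leaves implicit.
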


\begin{proof}
By Remark \ref{rmrk-converseEquibddEquicpct}, 
the conclusions of Theorem \ref{thm:timed-Gromov-compactness-CPS}. Now, the first claim follows by the uniform convergence of 
$\varphi_j \circ \mathcal I^j: \mathcal A \to Z$ to $\varphi_\infty \circ \mathcal I^\infty: \mathcal A \to Z$ given in \eqref{eq:alpha-A}. To establish the second claim, let $p_j=\mathcal{I}^j(\alpha),p_j'=\mathcal{I}^j(\alpha')\in X_j$ for $j \in \mathbb N \cup \{\infty\}$. 
Assume that the corollary is false, 
then there exists a subsequence of $j_k$ such that 
\be
p_{j_k}\in J^+_{X_{j_k}}(p'_{j_k}).
\ee
Since $p_\infty\notin J^+_{X_\infty}(p'_\infty)
$, we have by Definition~\ref{defn:timed-metric-space} that
\be
\tau_\infty(p_\infty)-
\tau_\infty(p'_\infty)\neq
d_\infty(p_\infty,p'_\infty).
\ee
So there exists $\epsilon>0$ such that
\be\label{eq:contrthis}
|\tau_\infty(p_\infty)-
\tau_\infty(p'_\infty)-
d_\infty(p_\infty,p'_\infty)|
>\epsilon.
\ee
Once again by \eqref{eq:alpha-A}, there exists
$j$ sufficiently large for which we have
\be \label{eq:alpha-A-used}
\sup_{\alpha \in \mathcal{A}}
d_{\ell^\infty}\Big(\varphi_{j}({\mathcal I}^{j}(\alpha)),\varphi_\infty({\mathcal I}^\infty(\alpha))\Big) <\epsilon/3.
\ee
Thus, 
\begin{eqnarray}
&|\tau_j(p_j)-\tau_\infty(p_\infty)| <
\epsilon/12\\
&|\tau_j(p'_j)-\tau_\infty(p'_\infty)| < \epsilon/12\\
&\sup_{a\in A} |d_j(p_j,x^j_a)-d_\infty(p_\infty,x^\infty_a) | < \epsilon/12\\
 &\sup_{a'\in A}|d_j(p'_j,x^j_{a'})-d_\infty(p'_\infty,x^\infty_{a'})|<\epsilon/12.
\end{eqnarray}
Choosing first $a\in A$ 
so that
\be
d_\infty(p_\infty,x^\infty_{a})<\epsilon/12
\ee
and
$a'\in  A$ so that
\be
d_\infty(p'_\infty,x^\infty_{a'})<\epsilon/12
\ee
we have by the triangle inequality,
\begin{align}
|d_\infty(p_\infty,p'_\infty)
-d_j(p_j,p'_j)| &\leq 
\epsilon/12+\epsilon/12+\epsilon/12+\epsilon/12 \\
 &=  \epsilon/3.
\end{align}
So for $j=j_k$ sufficiently large 
\begin{align}
|\tau_\infty(p_\infty)-
\tau_\infty(p'_\infty)-
d_\infty(p_\infty,p'_\infty)|
&\leq 
|\tau_j(p_j)-
\tau_j(p'_j)-
d_j(p_j,p'_j)|
 + 
|\tau_j(p_j)-\tau_\infty(p_\infty)| \nonumber\\
&\quad + |\tau_j(p'_j)-\tau_\infty(p'_\infty)|
+ |d_\infty(p_\infty,p'_\infty)
-d_j(p_j,p'_j)| \\
 &\leq \epsilon/12+\epsilon/12+\epsilon/3=\epsilon/2
 \end{align}
 which contradicts
 (\ref{eq:contrthis}).
\end{proof}

\section{Causally-null timed-metric spaces}\label{s:causally-null tms}

We now introduce the null distance on a timed-metric space, based on the notion of null distance on a Lorentzian manifold introduced by Sormani--Vega
\cite{SV-Null}, and also introduce the notion of causally-null timed-metric space based on the smooth notion given by Sakovich--Sormani \cite{SakSor-Notions}.

\begin{defn}\label{defn:null-distance}
Given a timed-metric space $(X,d,\tau)$, 
we define $\hat{d}_{d,\tau}: X \times X \to [0, \infty]$
as
\be\label{eq:null}
\hat{d}_{d,\tau}(p,q)=\inf \sum_{i=1}^{N} |\tau(p_i)-\tau(p_{i-1})| 
\ee
where the infimum is over all collections of points, if they exist, $\{p_i\}_{i=0}^{N}$, where  
$p=p_{0}$ and $p_{N}=q$, and $p_i$, $p_{i-1}$ are causally related, possibly alternating past
with future:
\be\label{eq:null-points}
p_i\in J^-(p_{i-1}) \quad \text{ or } \quad p_i \in J^+(p_{i-1}), \quad 
 i=1,\ldots,N.
\ee
Otherwise, $\hat{d}_{d,\tau}(p,q)=\infty$. 
We call $\hat{d}_{d,\tau}$ the  associated \textbf{null distance} of 
$(X,d,\tau)$, although it is an extended distance in general. If $\hat{d}_{d,\tau}=d$ we say that $(X,d,\tau)$ is \textbf{causally null}.  
\end{defn}

\bigskip

\begin{proof}[Proof of Proposition \ref{prop-null-distance is greater}]

In what follows, to avoid confusion, let us denote  $\hat{d}_{d,\tau}$ as $\hat{d}$, and the causal structure of $(X,\hat d,\tau)$ as $J^\pm_{\hat d}$. 

First we prove that $\hat{d}$ is an extended distance. It is clear that $\hat{d}$ is non-negative and symmetric. Let $x,y,z\in X$ and without loss of generality assume that 
$\hat{d}(x,y), \hat{d}(y,z)< \infty$.
Then for any $\varepsilon>0$ let $\{p_i\}_{i=0}^{N}$ and $\{q_i\}_{i=0}^{M}$ be such that $p_0=x$, $p_N = y = q_0$, $q_M=z$, and $p_i\in J^\pm(p_{i-1})\cap J^\pm(p_{i+1})$ and $q_i\in J^\pm(q_{i-1})\cap J^\pm(q_{i+1})$, and 
\be
\sum_{i=1}^{N} |\tau(p_i)-\tau(p_{i-1})| < \hat{d}(x,y) + \varepsilon/2 
\ee
and
\be
\sum_{i=1}^{M} |\tau(q_i)-\tau(q_{i-1})| < \hat{d}(y,z) + \varepsilon/2.
\ee
Then
\be
\hat{d}(x,z) \leq \sum_{i=1}^{N} |\tau(p_i)-\tau(p_{i-1})| +  \sum_{i=1}^{M} |\tau(q_i)-\tau(q_{i-1})| \leq \hat{d}(x,y) + \hat{d}(y,z) + \varepsilon.
\ee
By letting $\varepsilon\to 0$, it follows that $\hat d$ satisfies the triangle inequality.

Now take $x,y \in X$. By definition,
\begin{align*}
\hat{d}(x,y) = \inf\left\{\sum_{i=1}^{N} |\tau(p_i)-\tau(p_{i-1})|: x = p_0,\dots, p_{N} = y,\ p_{i}\in J^{\pm}(p_{i-1})\right\}.
\end{align*}
Since $p\in J^{+/-}(q)$ is equivalent to $|\tau(q)-\tau(p)| = d(q,p)$, we get
\be
\sum_{i=1}^{N} |\tau(p_i)-\tau(p_{i-1})| = \sum_{i=1}^{N} d(p_i,p_{i-1}) \geq d(x,y)
\ee
whenever $x = p_0,\dots, p_{N} = y$, $p_{i}\in J^\pm_{d,\tau}(p_{i-1})$. Taking the infimum over such tuples $\{p_i\}_{i=0}^{N}$, inequality \eqref{eq:null-distance is greater} follows. In particular, \eqref{eq:null-distance is greater} implies that $\hat{d}$ is positive definite, and 
that $\tau$ is $1$-Lipschitz with respect to $\hat{d}$, given that $\tau$ is $1$-Lipschitz with respect to $d$.

Now assume that $\hat{d}(x,y)<\infty$ for any $x,y\in X$. Therefore $(X,\hat{d},\tau)$ is a timed-metric space. To prove that $\hat{d}_{\hat{d},\tau}= \hat d$ in this case, we first show that the causal structures agree:
\begin{equation}\label{eq-structuresAgree}
    J^-(y)=J^-_{\hat{d}}(y) \quad \forall y \in X. 
\end{equation}
Indeed, let $x\in J^-_{d}(y)$. Taking $p_0 = p_1 = x$ and $p_2 = y$ in the definition of $\hat{d}(x,y)$ yields 
\begin{align*}
\hat{d}(x,y) \leq & (\tau(x)-\tau(x)) + (\tau(y) - \tau(x)) \\
= & d(x,y) \leq \hat{d}(x,y), 
\end{align*}
where in the last part we used \eqref{eq:null-distance is greater}.
Hence, all inequalities are equalities. In particular,  $\hat{d}(x,y) = \tau(y)-\tau(x)$, which proves $x\in J^-_{\hat{d}}(y)$.  Conversely, if $x\in J^-_{\hat{d}}(y)$, by \eqref{eq:null-distance is greater} and $\tau$  $1$-Lipschitz with respect to $d$, we get
\begin{align*}
\tau(y) - \tau(x) = & \hat{d}(x,y)  
\geq  d(x,y) \geq \tau(y)-\tau(x).
\end{align*}
Therefore, $d(x,y) = \tau(y)-\tau(x)$, i.e., 
$x\in J^-(y)$. This concludes the proof of \eqref{eq-structuresAgree}. Thus, 
\begin{align*}
\hat{d}_{\hat{d}, \tau}(x,y) 
&= \inf\left\{\sum_{i=1}^{N} |\tau(p_i)-\tau(p_{i-1})| : x=p_0,\dots, p_{N}=y,\ p_{i}\in J^\pm_{\hat{d}}(p_{i-1})\right\}\\
&= \inf\left\{\sum_{i=1}^{N} |\tau(p_i)-\tau(p_{i-1})| : x=p_0,\dots, p_{N}=y,\ p_{i}\in J^\pm(p_{i-1})\right\}\\
&= \hat{d}(x,y).
\end{align*}
\end{proof}

We now adapt the notion of sufficiently causally connected, introduced 
by Kunzinger-Steinbauer \cite{Kunzinger-Steinbauer-22}, which provides a sufficient condition for obtaining a causally-null timed-metric space.

\begin{defn}
Given a timed-metric space $(X,d,\tau)$, 
we say that $(X,d,\tau)$ is \textbf{causally connected} if for any $p,q\in X$ such that $p\in J^-(q)$ there exists a future-directed causal curve from $p$ to $q$. Furthermore, we say that $(X,d,\tau)$ is \textbf{sufficiently causally connected} if it is path-connected, causally connected and $X = \bigcup_{x\in X} \interior(J^-(x))\cup \interior(J^+(x))$.
\end{defn}

An argument analogous to the proofs of \cite[Lemma~3.5]{Kunzinger-Steinbauer-22} and \cite[Lemma~3.5]{SV-Null} yields the following result.

\begin{prop}\label{prop:scc imply causally null}
Let $(X,d,\tau)$ be a sufficiently causally connected timed-metric space. Then $\hat{d}_{d,\tau}(p,q)<\infty$ for all $p,q\in X$, and thus, $(X,\hat{d}_{d,\tau},\tau)$ is a causally-null timed-metric space. 
\end{prop}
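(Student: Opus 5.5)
The plan follows the classical argument for the null distance: show that the relation ``$p\sim q$ iff $\hat d_{d,\tau}(p,q)<\infty$'' has open equivalence classes, and then use path-connectedness. Once $\hat d_{d,\tau}$ is finite everywhere, Proposition~\ref{prop-null-distance is greater} gives directly that $(X,\hat d_{d,\tau},\tau)$ is a timed-metric space with $\hat d_{\hat d_{d,\tau},\tau}=\hat d_{d,\tau}$, which is exactly causal nullity. So the whole task reduces to proving finiteness.

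First, $\sim$ is an equivalence relation. Reflexivity and symmetry are immediate, and transitivity is the triangle inequality for the extended distance $\hat d$ proved in Proposition~\ref{prop-null-distance is greater}.

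Next, each equivalence class is open. Fix $p\in X$. By the last condition in sufficient causal connectedness, there is $x\in X$ such that $p\in\interior(J^-(x))$ or $p\in\interior(J^+(x))$. Say $p\in U:=\interior(J^-(x))$; the other case is symmetric. For any $y\in U$ we have $y\in J^-(x)$, so the two-point chain $y,x$ is admissible in Definition~\ref{defn:null-distance}, and hence $\hat d(y,x)\le \tau(x)-\tau(y)<\infty$. The same holds for $p$, so $\hat d(p,y)\le\hat d(p,x)+\hat d(x,y)<\infty$. Thus $U$ is an open neighbourhood of $p$ contained in the class of $p$.

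Since the classes are open and partition $X$, each class is also closed. Take $p,q\in X$. Because $X$ is path-connected, there is a continuous curve $\gamma\colon[0,1]\to X$ from $p$ to $q$. The set $\{t:\gamma(t)\sim p\}$ is open, closed and nonempty in $[0,1]$, so it is all of $[0,1]$, and therefore $q\sim p$. (Alternatively, one can cover $\gamma([0,1])$ by finitely many open sets $\interior(J^\mp(x_k))$ using compactness and chain through overlapping pieces.)

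I expect no serious obstacle in this argument. The one point to check is that the chain in Definition~\ref{defn:null-distance} only requires consecutive points to be causally related. That is exactly what the neighbourhoods $\interior(J^\pm(x))$ provide. Causal connectedness, the existence of causal curves, is not actually needed for finiteness.
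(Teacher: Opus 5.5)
Your proof is correct, and it takes a somewhat different route from the paper's.

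The paper starts from a path $\alpha$ from $p$ to $q$ and covers $\alpha(I)$ by finitely many sets $\interior(J^-(x_{2i}))\cup\interior(J^+(x_{2i}))$. It arranges these so that consecutive ones overlap and picks points $x_{2i+1}$ in the overlaps. It then uses causal connectedness to join $p,x_2,x_3,\dots,x_{2m},q$ by causal curves, which gives a piecewise causal curve from $p$ to $q$. Finiteness of $\hat d_{d,\tau}$ is then read off from this curve without being stated explicitly.

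You instead show that the classes of the relation $\hat d_{d,\tau}(p,q)<\infty$ are open, hence clopen, and conclude by connectedness.

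Your version has three advantages:
\begin{itemize}
\item It shows that causal connectedness is superfluous for this statement. In the paper's construction, consecutive points of the breakpoint chain $p,x_2,x_3,\dots,x_{2m},q$ are already causally related, so the chain is admissible in Definition~\ref{defn:null-distance} and the causal curves are never needed.
\item It only uses connectedness of $X$, not path-connectedness.
\item It avoids the combinatorial step of extracting an overlapping linear chain from a finite cover of $\alpha(I)$, which the paper handles only briefly (``by reordering if necessary'').
\end{itemize}

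The paper's version gives a stronger geometric output: an actual piecewise causal curve from $p$ to $q$, in the spirit of Kunzinger--Steinbauer's Lemma~3.5 and relevant to Proposition~\ref{prop:piecewise causal realizers}. That extra information is not needed for the proposition as stated.

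Your last step, invoking Proposition~\ref{prop-null-distance is greater} once $\hat d_{d,\tau}$ is finite to get causal nullity, matches what the paper means by ``and thus''.
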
 

\begin{proof}
Let $p, q \in X$. We will show that 
there is a piecewise causal curve that connects $p$ to $q$.  Since $X$ is path-connected, there is a curve 
$\alpha: I \to X$ from $p$ to $q$. 
By compactness of $\alpha(I)$, there are finitely many
points $x_{2i} \in \alpha(I)$,  $1\leq i \leq m$, 
such that 
\be
\alpha(I) \in  \bigcup_{1}^m \interior(J^-(x_{2i}))\cup \interior(J^+(x_{2i})).
\ee
By connectedness, and by reordering
if necessary, we may assume that 
\be\label{eq-p2}
p \in
\interior(J^-(x_{2}))\cup \interior(J^+(x_{2})),
\ee
\be\label{eq-p2m}
q \in 
\interior(J^-(x_{2m}))\cup \interior(J^+(x_{2m})),
\ee
and for $1 \leq i \leq m-1$
\be\label{eq-p2i-p2i+2}
[\interior(J^-(x_{2i}))\cup \interior(J^+(x_{2i}))]
\cap [\interior(J^-(x_{2i+2}))\cup \interior(J^+(x_{2i+2}))]
 \neq \emptyset.
 \ee
Since $X$ is causally connected, 
and \eqref{eq-p2} holds, there is a causal curve $\gamma_1$ from $p$ to
$x_2$. Similarly, for $1< i < m$, since there is a point
$x_{2i+1}$ contained in the set given in \eqref{eq-p2i-p2i+2}, there is a causal curve
$\gamma_{2i}$ from $x_{2i}$ to $x_{2i+1}$, and a causal curve
$\gamma_{2i+1}$ from $x_{2i+1}$ to $x_{2i+2}$. Finally, by \eqref{eq-p2m}, there is a causal curve $\gamma_{2m}$ from $x_{2m}$
to $q$. Then $\gamma = \gamma_1 \cdots \gamma_{2m}$ is a piecewise causal curve
from $p$ to $q$. 
\end{proof}

The following example shows that, in general, we should not expect the property of being causally null to be stable under intrinsic timed-Hausdorff, nor timed Gromov--Hausdorff, convergence.
\begin{example}\label{ex:causally null not preserved}
For $j \in \mathbb N$, let $X_j$ be set
\be
X_j = \left\{
\left(\frac{k}{j},0\right)
\right\}_{k=0}^{j}
\cup\left\{
\left(\frac{2k-1}{2j},0\right)
\right\}_{k=1}^{j}\subset \mathbb{R}^2
\ee
endowed with the metric
\be
d_j((x,y),(x',y')) = |x-x'|
\ee
and the time function
\be
\tau_j(x,y) = y.
\ee
It is easy to verify that $(X_j,d_j,\tau_j)$ is causally null and that $(X_j,d_j,\tau_j)\tHto (X,d,\tau)$ where
\be
X = [0,1], \quad d(x,x') = |x-x'|, \quad \tau(x) = 0. 
\ee
However, the null distance in $X$ is infinite, since for any $x\neq x'$, $x,x'\in X$ there is no finite (not even a countably infinite) chain of causally related points connecting $x$ to $x'$. Therefore, $X$ is not causally null.
\end{example}

\begin{rmrk}
One may argue that the previous counterexample is pathological because the spaces $X_j$ are discrete, or because the time function in the limit is identically zero. However, it is possible to construct counterexamples where the sequence consists of (regular) space-times. This will be done in the upcoming work \cite{prados-zoghlami}.
\end{rmrk}

We conclude this section by studying the relationship between 
null distances that encode causality in the sense of \cite{SakSor-Null} and our definition of causally-null timed-metric spaces.

\begin{prop}\label{prop:causal}
Let $N$ be a space-time and consider a time function $\tau\colon N\to [0,\infty)$ whose null-distance encodes causality in the sense of \cite{SakSor-Null} and defines a timed-metric space $(N, \hat d_{\tau},\tau)$. Then $(N, \hat d_{\tau},\tau)$ is causally null in the sense of Definition \ref{defn:null-distance}.
\end{prop}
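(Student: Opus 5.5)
We need to show that $\hat d_{\hat d_\tau,\tau}=\hat d_\tau$, where the left side is the null distance of Definition~\ref{defn:null-distance} built from the timed-metric space $(N,\hat d_\tau,\tau)$, and $\hat d_\tau$ is the Sormani--Vega null distance of $\tau$ on $N$. The plan is to compare the two notions of causal relation and then compare the two infima defining the distances.

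\emph{Step 1: the timed-metric causal structure equals the Lorentzian one.} Encoding causality in the sense of \cite{SakSor-Null} means
\[
q\in J^+_N(p)\iff \hat d_\tau(p,q)=\tau(q)-\tau(p).
\]
The right-hand condition is exactly $q\in J^+(p)$ in Definition~\ref{defn:causal-structure} for $(N,\hat d_\tau,\tau)$. Hence the timed-metric causal futures and pasts coincide with the Lorentzian $J^\pm_N$.

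\emph{Step 2: inequality $\ge$.} By Proposition~\ref{prop-null-distance is greater} applied to $d=\hat d_\tau$, we have $\hat d_{\hat d_\tau,\tau}\ge \hat d_\tau$.

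\emph{Step 3: inequality $\le$.} Here we use the Sormani--Vega definition: $\hat d_\tau(p,q)$ is the infimum of $\sum_i|\tau(x_i)-\tau(x_{i-1})|$ over piecewise causal curves from $p$ to $q$, with breakpoints $x_i$. Consecutive breakpoints are joined by a causal curve, so they are causally related in $N$, and by Step 1 they are causally related in the timed-metric sense. Each such chain is therefore admissible in \eqref{eq:null} and has the same cost, giving $\hat d_{\hat d_\tau,\tau}(p,q)\le \hat d_\tau(p,q)$. This argument also shows that $\hat d_{\hat d_\tau,\tau}$ is finite whenever $\hat d_\tau$ is finite.

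Combining Steps 2 and 3 gives equality, so $(N,\hat d_\tau,\tau)$ is causally null.

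The main obstacle is Step 1: one must make sure the exact formulation of "encodes causality" in \cite{SakSor-Null} is the equivalence $q\in J^+_N(p)\iff\hat d_\tau(p,q)=\tau(q)-\tau(p)$, rather than a one-directional statement. In Step 3 only one direction is actually needed: causal relatedness in $N$ must imply the timed-metric relation. That direction also follows directly, without the encoding hypothesis, from the standard fact that $\hat d_\tau(p,q)=\tau(q)-\tau(p)$ whenever $q\in J^+_N(p)$ for a time function (\cite[Lemma~3.4]{SV-Null}). So Step 3 only needs this basic property, and the encoding hypothesis serves mainly to confirm that the two causal structures agree.
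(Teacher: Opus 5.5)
Your argument is correct, but it is organized differently from the paper's. The paper uses the encoding hypothesis in both directions to identify the timed-metric relation $J^\pm$ of $(N,\hat d_\tau,\tau)$ with the Lorentzian relation $\leq_N$. The infimum defining $\hat d_{\hat d_\tau,\tau}$ then becomes an infimum over chains of $\leq_N$-related points, which the paper equates with $\hat d_\tau$ in a single step. That step implicitly uses that such chains are exactly the breakpoint sequences of piecewise causal paths, i.e.\ both ``path $\to$ chain'' and ``chain $\to$ path'' (the latter by concatenating causal curves). You instead prove two inequalities. The lower bound comes for free from Proposition~\ref{prop-null-distance is greater}, which holds in any timed-metric space. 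The upper bound needs only the ``path $\to$ chain'' direction, together with the one-sided fact that Lorentzian-causally related points satisfy $\hat d_\tau(p,q)=|\tau(q)-\tau(p)|$. As you partly observe at the end, Step~1 is therefore never actually used. Your proof shows that $(N,\hat d_\tau,\tau)$ is causally null for any time function for which $(N,\hat d_\tau,\tau)$ is a timed-metric space, with no encoding hypothesis, so what you call the main obstacle is not an obstacle at all. The paper's route buys, in exchange, the structural statement that the two causal structures coincide. That is where encoding causality genuinely matters, since it makes the timed-metric $J^\pm$ of $(N,\hat d_\tau,\tau)$ faithful to the space-time. Your route isolates exactly what is needed for the equality $\hat d_{\hat d_\tau,\tau}=\hat d_\tau$.
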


\begin{proof}
Let $\hat{d}_{\tau}$ be the null distance on $N$ associated to the time function $\tau$. The fact that $\hat{d}_{\tau}$ encodes causality means that $x\leq_N y$, where $\leq_N$ denotes the causal order in $N$, if and only if 
\[
\tau(y)-\tau(x) = \hat{d}_{\tau}(x,y).
\] However, this 
coincides with the definition of causality
for the timed-metric space $(N,\hat{d}_{\tau},\tau)$, see \eqref{def-causal-struc}. Therefore,
\begin{align*}
\hat{d}_{\hat{d}_{\tau},\tau}(x,y) &= \inf\left\{
\sum_{i=1}^{N} |\tau(p_i)-\tau(p_{i-1})| : x = p_0, \dots, p_{N}=y,\ p_{i}\in J^\pm(p_{i-1})
\right\}\\
&= \inf\left\{
\sum_{i=1}^{N} |\tau(p_i)-\tau(p_{i-1})| : x = p_0, \dots, p_{N}=y,\ p_{i}\leq_N p_{i-1}\ \text{or}\ p_{i-1}\leq_N p_i
\right\}\\
&= \hat{d}_{\tau}(x,y),
\end{align*}
for any $x,y\in N$, and the claim follows.
\end{proof}

\begin{rmrk}
If $N$ is a path-connected space-time and 
$\tau\colon N \to \mathbb R$ is a generalized proper time function which is locally anti-Lipschitz in the sense of \cite{chrusciel-grant-minguzzi}, then, 
by \cite{SakSor-Null}, $(N, \hat d_{\tau}, \tau)$ is a timed-metric space and $\hat{d}_{\tau}$ encodes causality. In particular, $(N, \hat d_{\tau}, \tau)$ is a causally-null timed-metric space in the sense of Definition~\ref{defn:null-distance}. 
\end{rmrk}

\begin{example}
    Observe that, in general, Proposition~\ref{prop:causal} does not imply that the completion of $(N, \hat d_{\tau},\tau)$ is causally null in the sense of Definition~\ref{defn:null-distance}, not even when $N$ is a causally-null compactifiable space-time, in the sense of \cite{SakSor-Notions}, and $\tau$ is the cosmological time function, as was pointed out to us by Omar Zoghlami. 
    
    Indeed, let $N$ be the space-time consisting of the interior of the triangle in $\mathbb{R}^2$ with vertices $(0,0)$, $(1,0)$ and $(0.75,0.25)$, endowed with the restriction of the Minkowski metric (where the second coordinate has negative signature). This space-time is clearly causally-null compactifiable, with $\left(\bar{N},\bar{\hat{d}}\right)$ isometric to the corresponding closed triangle. However, under this isometry, one can see that the point $(0,0)\in \bar{N}$ is not causally related to any other point in $\bar{N}$, in the sense of Definition~\ref{def-causal-struc}, which implies $\hat{d}_{\bar{d},\bar{\tau}}((0,0),(x,y))=\infty\neq \bar{d}((0,0),(x,y))$ for any $(x,y)\in \bar{N}$, where $\bar{\tau}$ and $\bar{d}$ are the canonical extensions of the cosmological time function and the associated null distance in $N$ to $\bar{N}$. 
\end{example}
\begin{rmrk}
    Conditions granting that the metric completion of a causally-null timed-metric space is also causally null will be addressed in the upcoming work \cite{prados-zoghlami}.
\end{rmrk}

\appendix

\section{Gromov's compactness theorem}\label{sec-appendix}

Here we reprove Gromov's Compactness Criterion from \cite{Gromov-poly} adding as many details as possible. Note that as a straightforward application, we gave the proof of our compactness theorem in Section \ref{sec-timedGH}.

\begin{thm}\label{thm-GHcriterion}
If $(X_j,d_j)$ is a sequence of compact timed-metric spaces that is equibounded and equicompact, as in \eqref{eq:equibounded-B} and \eqref{eq:equicompact-B}
Then there exists a subsequence of the $(X_j, d_j)$ that converges in the Gromov--Hausdorff sense to a compact metric space $(Y_\infty, d_\infty)$. In fact, there exist compact $F$ and $Y_\infty$, distance preserving maps, up to a subsequence that we do not relabel,
\[
h_j: X_j \to F \qquad \forall j\in \mathbb N,
\]
and $h_\infty : Y_\infty \to  F$ distance preserving, 
 such that 
\begin{equation}\label{eq-GHcriterion-H}
    d_H^{F} (h_j(X_j),h_\infty(Y_\infty))\to 0.
\end{equation}
\end{thm}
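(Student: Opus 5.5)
The plan follows Gromov's original idea: embed every $X_j$, via Kuratowski/Fréchet-type maps built from uniformly controlled nets, into one fixed compact subset $F$ of $\ell^\infty$. Blaschke's theorem then produces a Hausdorff limit inside $F$.

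\textbf{Step 1 (nets).} Set $\varepsilon_i=2^{-i}$. Use Proposition~\ref{prop:selection} to fix, for each $j$, points $x^j_a$ with $a\in A=\bigsqcup_i A_i$. For every $i$ the points $\{x^j_a\}_{a\in A_i}$ form an $\varepsilon_i$-net of $X_j$, indexed by the same finite set $A_i$ for all $j$, and $\mathcal N_j=\{x^j_a\}$ is dense. Enumerate $A$ once and for all as $a_1,a_2,\dots$, listing $A_1$ first, then $A_2$, and so on.

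\textbf{Step 2 (the embedding and $F$).} Define $h_j:X_j\to\ell^\infty$ by $h_j(x)=(d_j(x^j_{a_k},x))_{k\in\mathbb N}$. This is a Fréchet map, hence distance preserving. Let $F$ be the set of sequences $s=(s_k)$ satisfying three conditions:
\begin{itemize}
\item $0\le s_k\le D$ for all $k$;
\item $|s_k-s_l|\le 4\varepsilon_i$ whenever $a_l\in A_{i+1}$ and $a_k=p_i(a_l)$;
\item $|s_k-s_l|\le D_{kl}$, where $D_{kl}=\sup_j d_j(x^j_{a_k},x^j_{a_l})\le D$.
\end{itemize}
Each image $h_j(X_j)$ lies in $F$: the second condition holds by \eqref{eq:in-0} together with the triangle inequality, and the third holds because $x\mapsto d_j(\cdot,x)$ is $1$-Lipschitz. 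The set $F$ is closed under coordinatewise convergence, and every defining condition is closed.

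The main point is that $F$ is compact in $\ell^\infty$. It is complete as a closed subset, so it suffices to show it is totally bounded. Given $\delta=\varepsilon_i$, take $s\in F$ and any coordinate index $l$ with $a_l\in A_m$, $m>i$. Chaining through the projections $p_{m-1},\dots,p_i$ gives
\[
|s_l-s_{k(l)}|\le 4\textstyle\sum_{r\ge i}\varepsilon_r=8\varepsilon_i ,
\]
where $a_{k(l)}\in A_i$ is the ancestor of $a_l$. So every element of $F$ is determined up to $8\varepsilon_i$ in sup norm by its finitely many coordinates indexed by $A_1\cup\dots\cup A_i$. Those coordinates lie in the cube $[0,D]^{|A_1|+\dots+|A_i|}$, which is totally bounded. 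This finite-dimensional reduction is the step I expect to be the crux; the second condition is included in $F$ precisely to force this tail control.

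\textbf{Step 3 (limit).} The compact sets $h_j(X_j)\subset F$ have, by Blaschke's theorem, a subsequence converging in $d_H^F$ to a compact $Y_\infty\subset F$. Equip $Y_\infty$ with the restricted sup metric and take $h_\infty$ to be the inclusion. This yields \eqref{eq-GHcriterion-H}, and hence GH convergence, because $d_{GH}\le d_H^F$ of the images.
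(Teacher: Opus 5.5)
Your proposal is correct and follows essentially the same route as the paper: the same Fr\'echet-type embedding $h_j$ indexed by the address nets of Proposition~\ref{prop:selection}, a compact $F\subset\ell^\infty$ cut out by the tail condition $|f(a)-f(p_i(a))|\le c\,\varepsilon_i$ (the paper uses $c=2$ and bounds $f$ only on $A_1$; your third condition with $D_{kl}$ is harmless but unnecessary), and Blaschke's theorem. The only difference is in how you prove $F$ compact: you use completeness plus total boundedness, while the paper extracts a coordinatewise convergent diagonal subsequence and upgrades it to sup-norm convergence. Both rest on the same chained tail estimate. One small correction: two elements of $F$ that agree on $A_1\cup\dots\cup A_i$ can differ by up to $16\varepsilon_i$, not $8\varepsilon_i$, which does not affect the argument.
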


\bigskip 

\begin{proof}
Apply Proposition \ref{prop:selection}
to obtain a
countable index set $A=\disjointunion A_i$,  and 
a countable dense collection of points,
\be
I^j(A)=\{x^j_{a}=I^j(a):\,a\in A\}\subset X_j \qquad j \in \mathbb N,
\ee
that satisfy \eqref{eq:cover} and \eqref{eq:in-0}.

Let 
\be
F'= \Bigl\{ f : A \to \mathbb{R} \, : f \,\text{bounded} \Bigr\} 
\ee
endowed with the sup norm,
$\|f\| = \sup_{a \in A} |f(a)|$. Then define 
\be
F = \Bigl\{ f \in F'  \, | f \text{ satisfies  \eqref{eq-fA_1bdd} and \eqref{eq-fcont}} \Bigr\},
\ee
where conditions  \eqref{eq-fA_1bdd} and \eqref{eq-fcont} are given by 
 \be\label{eq-fA_1bdd}
f(A_1) \subset [0,D]
 \ee
 and 
 \be\label{eq-fcont}
|f(a) - f(p_i(a))| \leq 2\varepsilon_i \,\, \forall i \in \mathbb N\,\forall a \in A_{i+1}.
 \ee

\medskip

We first claim that for each $x \in X_j$, the map $(h_j(x)) : A \to \mathbb R$ given by  
\be\label{eq-hjxa}
(h_j(x))(a) =d_j(x, I^j_i(a)), \qquad \forall a_i \in A_i \subset A
\ee
is contained in $F$. 
By  the equiboundedness condition, \eqref{eq:equibounded-B}, 
it follows that $(h_j(x))\leq D$.
Thus, $(h_j(x)) \in F'$, and in particular, $(h_j(x))$ satisfies \eqref{eq-fA_1bdd}.

Now, by \eqref{eq:in-0}, i.e. for each $a \in A_{i+1}$,  
\[
I^j_{i+1}(a) \in B_{d_j}(I^j_i(p_i(a)), 2\varepsilon_i)\subset X_j,
\]
and the triangle inequality, we get
\[
|(h_j(x))(a) - (h_j(x))(p_i(a))|= |d_j(x, I^j_{i+1}(a)) - d_j(x, I^j_i(p_i(a)))| \leq 2 \varepsilon_i.
\]
Hence, $h_j(x) \in F$. 

\medskip

Secondly, we claim that the maps $h_j : X_j \to F$ given by \eqref{eq-hjxa} are distance preserving. Indeed, since the collection of points,
$I^j(A)=\bigcup_{i\in {\mathbb N}} I^j_i(A_i)$ is dense in $X_j$, 
for any $x \in X_j$ there exists a sequence $a^k \in A$ such that $d_j(x,I^j(a^k)) \to 0$ as $k \to \infty$. 
Thus, for  $y \in X_j$ 
\[
|(h_j(x))(a^k) - (h_j(y))(a^k)|= |d_j(x, I^j(a)) - d_j(y, I^j(a^k))| \to d_j(y, x).
\]
This gives 
$\| h_j(x) - h_j(y)\| \leq d_j(y, x)$. The reversed inequality follows by the triangle inequality. Hence, $h_j$ are distance preserving.

\medskip

Now we claim that $F$ is compact. To show this, take a sequence of functions $f_j \in F$. We now construct a pointwise limit function by induction on $i$.
For $i=1$, 
since $f_j(A_1) \subset [0,D]$, there is a subsequence of the $f_j$, that we do not relabel, 
and 
\be
f_\infty: A_1 \to [0,D]
\ee
such that 
\be
f_j \to f_\infty  \quad \text{uniformly in} \quad A_1,
\ee
where the uniform convergence is satisfied since $A_1$ is finite. 
Note that by \eqref{eq-fcont}, one has 
\begin{equation}\label{eq-fcont2}
    -2\varepsilon_i + f_j(p_i(a)) \leq f_j(a) \leq 2\varepsilon_i + f_j(p_i(a)) \qquad \forall a \in A_{i+1}. 
\end{equation}
In particular, by \eqref{eq-fA_1bdd}, we get 
 \be
-2\varepsilon_1 \leq f_j(a) \leq 2\varepsilon_1 + D \qquad \forall a \in A_2. 
\ee
Thus, we can find a further subsequence of the $f_j$, that we do not relabel, and extend $f_\infty$ to a function 
\be
f_\infty: A_1 \disjointunion A_2 \to [-2\varepsilon_i,2\varepsilon_i+D]
\ee
 such that 
\be
f_j \to f_\infty  \quad \text{uniformly in} \quad  A_1\disjointunion A_2,
\ee
where the uniform convergence is satisfied since $A_1 \disjointunion A_2$ is finite. This 
$f_\infty$ still satisfies \eqref{eq-fA_1bdd}, and for $i=1$ taking the limit in \eqref{eq-fcont2}, it also satisfies \eqref{eq-fcont} for $i=1$. 

Now assume that we have 
\be
f_\infty : A_1 \disjointunion \cdots \disjointunion A_k \to [ -2\sum_{i=1}^{k-1}\varepsilon_i, 2\sum_{i=1}^{k-1}\varepsilon_i +D ]
\ee
satisfying \eqref{eq-fA_1bdd} and, \eqref{eq-fcont} for $i=1,\ldots,k-1$, such that
\be
f_j \to f_\infty  \quad \text{uniformly in} \quad  A_1 \disjointunion \cdots \disjointunion A_k,
\ee
We now extend $f_\infty$ to $A_{k+1}$.
For $a \in A_{k+1}$, recursively applying condition (\ref{eq-fcont2}) for $i=k-1,\ldots,1$, we get 
\be
-2\sum_{i=1}^{k}\varepsilon_i  \leq f_j(a) \leq  2\sum_{i=1}^{k}\varepsilon_i+ D. 
\ee
Thus, we can find a further subsequence of the $f_j$, that we do not relabel, and a function 
\be
f_\infty: A_{k+1} \to [-2\sum_{i=1}^{k}\varepsilon_i , 2\sum_{i=1}^{k}\varepsilon_i+D]
\ee
 such that 
\be
f_j \to f_\infty  \quad \text{uniformly in} \quad  A_1 \disjointunion \cdots \disjointunion A_{k+1},
\ee
where the uniform convergence is satisfied since $
A_1 \disjointunion \cdots \disjointunion A_{k+1}
$ is finite. This 
$f_\infty$ still satisfies \eqref{eq-fA_1bdd}, and for $i=k$ taking the limit in \eqref{eq-fcont2}, it also satisfies \eqref{eq-fcont} for $i=k$. 
This gives the required function $f_\infty \in F$.

\medskip
We now show that the resulting subsequence $f_j$ converges to $f$ with respect to the sup norm.
Given $\varepsilon>0$, choose $m \in \mathbb N$ such that $\sum_{i=m}^\infty \varepsilon_i < \varepsilon/8$. Since by construction $f_j$ converges uniformly to $f_\infty$ in $
A_1 \disjointunion \cdots \disjointunion A_{m}$, there exists $J(\varepsilon) \in \mathbb N$ such that for all $j \geq J(\varepsilon)$
\begin{equation}\label{eq-mbound}
    |f_j(a)- f_\infty(a)| < \varepsilon/2 \qquad \forall a \in A_1 \disjointunion \cdots \disjointunion A_{m}. 
\end{equation}
Recursively applying condition (\ref{eq-fcont2}) for $i=m+k-1,\ldots,m$, we get 
\be
-2\sum_{i=m}^{m+k-1}\varepsilon_i + f_j(p_m(a)) \leq f_j(a) \leq  2\sum_{i=m}^{m+k-1}\varepsilon_i+ f_j(p_m(a)),
\ee
and similarly for $f_\infty$. 
Substracting the corresponding inequalities and using \eqref{eq-mbound}, we get  
\be
|f_j(a)- f_\infty(a)| < \varepsilon/2 + 4 \sum_{i=m}^{m+k-1} \varepsilon_i < \varepsilon \qquad \forall a \in A_{m+k},\,\forall j \geq N(\varepsilon). 
\ee
This together with \eqref{eq-mbound} gives 
\be
|f_j(a)- f_\infty(a)| < \varepsilon \qquad \forall a \in A, \,\forall j \geq N(\varepsilon).. 
\ee
Hence, $f_j$ converges to $f$ with respect to the sup norm.

\medskip 
Finally, since $F$ is compact, 
by Blaschke's compactness theorem (c.f. Chapter 7 in \cite{BBI}),
there is a subsequence of the $X_j's$, that we do not relabel,
such that $h_j(X_j)$ converges in Hausdorff sense in $F$ to some compact metric space $Y_\infty \subset F$.
\end{proof}

\bibliographystyle{alpha}
\bibliography{CP-timed_metric_spaces_and_causality}
\end{document}